\documentclass[11pt,twoside]{amsart}
\usepackage[T1]{fontenc}
\usepackage[utf8]{inputenc}
\usepackage[english]{babel}
\usepackage{graphicx}
\usepackage{amsmath}
\usepackage{amssymb}
\usepackage{enumerate}
\usepackage[all,cmtip]{xy}
\usepackage{url}
\usepackage{mathrsfs}

\usepackage{amsthm}
\usepackage{amsfonts}
\usepackage{changepage}

\newtheorem{thm1}{Theorem}[section]
\newtheorem{theorem}[thm1]{Theorem}
\newtheorem{lemma}[thm1]{Lemma}
\newtheorem{corollary}[thm1]{Corollary}
\newtheorem{proposition}[thm1]{Proposition}

\newtheorem{thmx}{Theorem}

\setlength{\evensidemargin}{0.1in} 
\setlength{\oddsidemargin}{0.1in} 
\setlength{\textwidth}{6.0in}

\theoremstyle{definition}
\newtheorem{definition}[thm1]{Definition}

\newtheorem{paragraf}[thm1]{}



\newcommand{\blap}[1]{\vbox to 7pt{\hbox{#1}\vss}}
\newcommand{\bblap}[1]{\vbox to 7pt{\hbox{#1}\vss}}

\RequirePackage{ifthen}
\RequirePackage{calc}
\newcounter{exampleendflag}
\newcommand{\exendhere}{
  \setcounter{exampleendflag}{0} 
  \ifmmode
    \eqno
    \ensuremath{\blacktriangle}
  \else
    \hspace{\stretch{1}}
    \ensuremath{\blacktriangle}
  \fi
}
\newenvironment{example}{
  \setcounter{exampleendflag}{1}
  \begin{exx}
}{
  \ifthenelse{\value{exampleendflag}=1}{\exendhere}{} 
  \end{exx}
} 

\newenvironment{ex}{
  \setcounter{exampleendflag}{1}
  \begin{exx}
}{
  \ifthenelse{\value{exampleendflag}=1}{\exendhere}{} 
  \end{exx}
} 
\newtheorem{exx}[thm1]{Example}

\theoremstyle{remark}
\newtheorem{remark}[thm1]{Remark}

\title{Gotzmann's persistence theorem for finite modules}
\author{\gss}%
\thanks{The author is supported by the Swedish Research Council, grant number 2011-5599.}
\subjclass[2010]{Primary 13D40, Secondary 14C05 }
\keywords{Gotzmann Persistence Theorem, Quot scheme, Fitting ideals, finite modules}
\newcommand{\saeden}{S\ae d\'en}
\newcommand{\stahl}{St\aa hl}
\newcommand{\gss}{Gustav \saeden\ \stahl}

\newcommand{\proj}{\operatorname{Proj}}

\newcommand{\fO}{\mathcal{O}}
\newcommand{\x}{\mathbf{x}}
\renewcommand{\i}{\mathbf{i}}
\renewcommand{\j}{\mathbf{j}}

\renewcommand{\phi}{\varphi}
\renewcommand{\epsilon}{\varepsilon}

\newcommand{\C}{\mathbb{C}}

\renewcommand{\P}{\mathbb{P}}

\newcommand{\Fitt}{\operatorname{Fitt}}

\numberwithin{equation}{section}
\setcounter{section}{0}

\address{Department of Mathematics, KTH Royal Institute of Technology, SE-100 44 Stockholm, Sweden}
\email{gss@math.kth.se}
\begin{document}
\begin{abstract}
We prove a generalization of Gotzmann's persistence theorem in the case of modules with constant Hilbert polynomial. As a consequence, we show that the defining equations that give the embedding of a Quot scheme of points into a Grassmannian are given by a single Fitting ideal.
\end{abstract}
\maketitle

\section*{Introduction}
One of the main results of this paper is the following.
\begin{thmx}\label{thm:a}
Let $A$ be a ring, let $S=A[X_0,\dots,X_r]$, let $M=\bigoplus_{i=1}^pS$, and let $N$ be a graded $S$-submodule of $M$, generated in degrees at most $d$. Write $Q=M/N$ and let $n\le d$. If $Q_t$ is locally free of rank~$n$ for $t=d$ and $t=d+1$, then $Q_t$ is locally free of rank~$n$ for all~${t\ge d}$.
\end{thmx}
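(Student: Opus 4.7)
The plan is to reduce the theorem to the classical Gotzmann persistence theorem over a field, applied fiber-by-fiber, and then to promote the fiberwise data to global local freeness.

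First, by induction on $t\ge d$, it suffices to prove the single-step case: assuming $Q_d$ and $Q_{d+1}$ are locally free of rank $n$ with $n\le d$, show that $Q_{d+2}$ is locally free of rank $n$. The hypothesis $n\le d$ persists when shifting $(d,d+1)$ to $(d+1,d+2)$, so one such step suffices. Next, I would establish a base change principle: for any $A$-algebra $A'$, the natural surjection $Q_t\otimes_A A' \twoheadrightarrow (M^{A'}/N^{A'})_t$ is in fact an isomorphism, with no flatness hypothesis needed. This is because $N$ is generated in bounded degrees, so the degree-$t$ part of $N^{A'}$ coincides with the image of $N_t\otimes_A A'$ in $M_t\otimes_A A'$. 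Specializing $A'=\kappa(\mathfrak{p})$ computes the fibers of $Q_t$ as graded quotients over $\kappa(\mathfrak{p})[X_0,\dots,X_r]$.

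Using this, I would invoke the classical Gotzmann persistence theorem (for graded submodules of a free module of rank $p$, in the constant-Hilbert-polynomial case with $n\le d$) over each residue field of $A$. Since the local freeness of $Q_d$ and $Q_{d+1}$ implies that their fibers are $\kappa$-vector spaces of dimension $n$, Gotzmann over $\kappa$ yields $\dim_\kappa Q_t\otimes\kappa=n$ for all $t\ge d$; in particular, $Q_{d+2}$ has constant fiber rank $n$.

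The main obstacle is the last step: promoting constant fiber rank to genuine local freeness. Because $Q_{d+1}$ is locally free, $N_{d+1}$ is a locally free summand of $M_{d+1}$, and the equality $N_{d+2}=S_1\cdot N_{d+1}$ (using that $N$ is generated in degrees $\le d$) produces a surjection $\psi\colon S_1\otimes_A N_{d+1}\twoheadrightarrow N_{d+2}$ from a locally free module, whose fiber rank is constantly $\dim M_{d+2}-n$. By the Fitting-ideal criterion, local freeness of $Q_{d+2}$ of rank $n$ is equivalent to $\Fitt_n(Q_{d+2})=A$ together with $\Fitt_{n-1}(Q_{d+2})=0$; the former follows from Macaulay's bound applied fiberwise, but the latter, which the fiberwise data alone only places in the nilradical, is the substantive content. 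Ruling out nilpotent contributions to $\Fitt_{n-1}(Q_{d+2})$ requires exploiting the specific multiplicative (Koszul-type) structure of $\psi$ together with the local freeness of $N_d$ and $N_{d+1}$; a purely ``constant fiber rank implies locally free'' heuristic is inadequate, as the example $k[\epsilon]/(\epsilon^2)\xrightarrow{\ \epsilon\ }k[\epsilon]/(\epsilon^2)$ shows. I expect the bulk of the technical work to lie here.
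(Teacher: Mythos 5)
Your reduction steps are all sound and match the paper's framework: the base change principle for the graded pieces, the fiberwise application of Gotzmann/Gasharov in the constant case $n\le d$ to get fiber dimension $n$ in all degrees $t\ge d$, and the identification via the Fitting-ideal criterion that $\Fitt_n(Q_{d+2})=A$ is easy while the whole content is $\Fitt_{n-1}(Q_{d+2})=0$, with fiberwise data only forcing this ideal into the nilradical. But at exactly that point the proposal stops: you state that ruling out nilpotents ``requires exploiting the specific multiplicative (Koszul-type) structure of $\psi$'' and that you ``expect the bulk of the technical work to lie here.'' That expectation is correct, and it is precisely the part that is not supplied; no argument, even in outline, is given for why $\Fitt_{n-1}$ vanishes exactly rather than merely being nilpotent. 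Since this is the entire new content of the theorem beyond the field case, the proposal has a genuine gap rather than being a complete proof by a different route.

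For comparison, the paper closes this gap as follows. After localizing and passing to a faithfully flat extension $A\to A'$ so the residue field has enough elements (Fitting ideals commute with base change, and equality of ideals descends along faithfully flat maps), one chooses a linear change of coordinates so that the hyperplane $V_+(x_0)$ misses the support of the fiber $Q\otimes_A k$. This produces (Proposition~\ref{prop:viktiggrej}) a monomial basis $\Lambda_d$ of $Q_d\otimes_A k$ with the closure property \eqref{eq:property}, in particular every $\mu\in\Lambda_d$ divisible by $x_0$, and Nakayama lifts it to a basis of $Q_d$. With this basis one gets presentations of $Q_{d+s}$ with free target $\langle\Lambda_{d+s}\rangle$ of rank $n$ (Propositions~\ref{prop:dirsumma}, \ref{prop:rerer} and Corollaries~\ref{cor:dplus1}, \ref{cor:1212121212}), and explicit column reductions show that every entry of the presentation matrix in degree $d+s$ is an $A$-linear combination of entries in degree $d+1$, and conversely; hence $\Fitt_{n-1}(Q_{d+s})=\Fitt_{n-1}(Q_{d+1})$ for all $s\ge1$ (Proposition~\ref{pro:lika}, Theorem~\ref{thm:bigone}). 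Local freeness of $Q_{d+1}$ gives $\Fitt_{n-1}(Q_{d+1})=0$, so all higher $\Fitt_{n-1}$ vanish identically, not just modulo nilpotents. Note also that this transfer of the Fitting ideal works in one stroke for all $s\ge1$, so the paper does not even need your inductive one-step reduction; if you want to complete your proposal you would need to supply an argument of comparable strength at the step you left open.
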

This theorem concerns homogeneous submodules $N\subseteq \bigoplus_{i=1}^pA[X_0,\dots,X_r]$ generated in degrees at most $d$, for some $d$. A special case of such a submodule is a homogeneous ideal ${I\subseteq A[X_0,\dots,X_r]}$ generated in degrees at most $d$. In that case, when $A$ is noetherian, we have Gotzmann's persistence theorem \cite{MR0480478} which states that if the graded component $Q_t$ of the quotient ${Q=A[X_0,\dots,X_r]/I}$ is flat over $A$ for $t=d$ and $t=d+1$, and ${\operatorname{rank}_AQ_{d+1}=(\operatorname{rank}_AQ_{d})^{\langle d\rangle}}$, then there are two implications. Firstly, the graded component $Q_t$ is flat over $A$ for all $t\ge d$, so $Q$ has a Hilbert polynomial $P(t)$. Secondly, the theorem states that ${P(t+1)=\operatorname{rank}_AQ_{t+1}=(\operatorname{rank}_AQ_{t})^{\langle t\rangle}}$ for all $t\ge d$.  

We have here used Macaulay representations to describe the assumption on the rank in Gotzmann's persistence theorem, see \cite[Section~4.2]{MR1251956}. It basically says that the rank has a certain polynomial growth from degree $d$ to degree $d+1$. Thus, the theorem implies that if the Hilbert function of $Q$ behaves like a polynomial in two consecutive degrees, then the homogeneous components in all higher degrees are flat and the Hilbert function $h(t)=\operatorname{rank}_AQ_t$ equals the Hilbert polynomial $P(t)$ for all degrees $t\ge d$. In the case when $\operatorname{rank}_AQ_{d}=n\le d$, we have that $(\operatorname{rank}_AQ_{t})^{\langle t\rangle}=n$ for all $t\ge d$. Thus, Theorem~\ref{thm:a} is a generalization of this result for the case with constant Hilbert polynomial $P(t)=n$. 

Note that Gasharov \cite{MR1468870} has proved a generalization of Gotzmann's persistence theorem to modules in the case of polynomial rings over fields, where the flatness is trivial, and that our result extends this to polynomial rings over arbitrary rings.

Our interest in the result of Theorem~\ref{thm:a} comes from its application to Quot schemes. 
In algebraic geometry, Gotzmann's persistence theorem has been used to find defining equations of Hilbert schemes, see, e.g.,  \cite{MR0480478} and \cite[Appendix~C]{MR1735271}. The Quot scheme is a generalization of both Hilbert schemes and Grassmannians, and is therefore a natural object to study. It was first introduced by Grothendieck who also proved its existence using an embedding into a Grassmannian \cite{MR1611822}. This embedding was however only given abstractly. For the case with constant Hilbert polynomials, Skjelnes proved that the embedding of the Quot scheme of points into a Grassmannian is given by an infinite intersection of closed subschemes defined by certain Fitting ideals \cite{QuotFitting}. Moreover, Skjelnes mentions that proving a generalization of Gotzmann's persistence theorem to modules would also prove that only one of those closed subschemes suffices to describe the embedding. We make this statement precise by showing the following consequence of Theorem~\ref{thm:a}.

\begin{thmx}\label{thm:b}
Let $V$ be a projective and finitely generated module over a noetherian ring~$A$. Let $\fO_{\P(V)}^{\oplus p}$ denote the free sheaf of rank $p$ on $f\colon\P(V)\to\operatorname{Spec}(A)=S$. Fix two integers $n\le d$, and let $g\colon G\to S$ denote the Grassmannian scheme parametrizing locally free rank~$n$ quotients of $f_\ast\fO_{\P(V)}^{\oplus p}(d)$. 
We let
\[0\longrightarrow\mathcal{R}_d\longrightarrow g^\ast f_\ast\fO_{\P(V)}^{\oplus p}(d)\longrightarrow\mathcal{E}_d\longrightarrow0\]
denote the universal short exact sequence on the Grassmannian $G$, and let $\mathcal{E}_{d+1}$ be the cokernel of the induced map $\mathcal{R}_d\otimes_{\fO_G}g^\ast f_\ast\fO_{\P(V)}(1)\to g^\ast f_\ast\mathcal{O}^{\oplus p}_{\P(V)}(d+1)$. Then, we have that
\[\operatorname{Quot}^n_{\fO_{\P(V)}^{\oplus p}/\P(V)/S}=V\bigl(\Fitt_{n-1}(\mathcal{E}_{d+1})\bigr),\]
where $\operatorname{Quot}^n_{\fO_{\P(V)}^{\oplus p}/\P(V)/S}$ is the Quot scheme parametrizing flat quotients of $\fO_{\P(V)}^{\oplus p}$ with constant Hilbert polynomial $P(t)=n$ on the fibers, and $V\bigl(\Fitt_{n-1}(\mathcal{E}_{d+1})\bigr)$ denotes the closed subscheme defined by the $(n-1)$:th Fitting ideal of the sheaf $\mathcal{E}_{d+1}$.
\end{thmx}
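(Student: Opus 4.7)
The plan is to prove $\operatorname{Quot}^n_{\fO_{\P(V)}^{\oplus p}/\P(V)/S}=V(\Fitt_{n-1}(\mathcal{E}_{d+1}))$ as closed subschemes of $G$ by comparing them on $T$-valued points for an arbitrary $S$-scheme $T$. Given $\phi\colon T\to G$, i.e., a locally free rank~$n$ quotient of $\phi^*g^*f_*\fO_{\P(V)}^{\oplus p}(d)$ with kernel $\phi^*\mathcal{R}_d$, let $N$ be the graded submodule of $\bigoplus_{i=1}^p\sym(V_T)$ generated in degree $d$ by $\phi^*\mathcal{R}_d$ and let $Q$ be the quotient, so that $Q_d=\phi^*\mathcal{E}_d$ and $Q_{d+1}=\phi^*\mathcal{E}_{d+1}$ by construction.

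First I would reinterpret the Quot condition: $\phi$ factors through $\operatorname{Quot}^n$ if and only if there is a flat coherent quotient $\mathcal{F}$ of $\fO_{\P(V_T)}^{\oplus p}$ of Hilbert polynomial $P(t)=n$ with $f_{T,*}\mathcal{F}(d)=\phi^*\mathcal{E}_d$. Because $d\ge n$ is at least the Gotzmann regularity of the constant polynomial $P(t)=n$, such an $\mathcal{F}$ exists precisely when $Q_t$ is locally free of rank $n$ for every $t\ge d$. By Theorem~\ref{thm:a}, this reduces to the same condition in degrees $d$ and $d+1$ alone; since $Q_d=\phi^*\mathcal{E}_d$ is automatic from the Grassmannian structure, the surviving condition is that $\phi^*\mathcal{E}_{d+1}$ be locally free of rank~$n$.

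I then match this with the Fitting-ideal condition: since Fitting ideals commute with pullback, $\phi$ factors through $V(\Fitt_{n-1}(\mathcal{E}_{d+1}))$ exactly when $\Fitt_{n-1}(\phi^*\mathcal{E}_{d+1})=0$. Local freeness of rank $n$ trivially implies this vanishing. For the converse, I would argue fiber-wise: at each $t\in T$, the fiber $\phi^*\mathcal{E}_{d+1}(t)$ is the degree-$(d+1)$ piece of a graded quotient of $\kappa(t)[X_0,\dots,X_r]^{\oplus p}$ whose degree-$d$ component has dimension $n$. The module version of Macaulay's upper bound, together with the identity $n^{\langle d\rangle}=n$ valid for $n\le d$, then forces $\dim_{\kappa(t)}\phi^*\mathcal{E}_{d+1}(t)\le n$. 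Combined with the opposite inequality coming from $\Fitt_{n-1}=0$, the fiber dimension is constantly $n$; Nakayama's lemma then yields $\Fitt_n(\phi^*\mathcal{E}_{d+1})=\fO_T$, and together with $\Fitt_{n-1}(\phi^*\mathcal{E}_{d+1})=0$ this is precisely the characterization of local freeness of rank~$n$.

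The main obstacle is the module version of Macaulay's upper bound used above, since the classical Macaulay theorem concerns homogeneous ideals in $S$ rather than graded submodules of $S^{\oplus p}$. I expect this is either a standard fact in the Gotzmann/Green literature on Hilbert function bounds for graded modules, or can be reduced to the ideal case by analyzing the component projections of $N\subseteq S^{\oplus p}$. Once this bound is secured, the two characterizations coincide on every $T$-point, so the subfunctors---and hence their representing closed subschemes of $G$---are equal, yielding the theorem.
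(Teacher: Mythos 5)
Your route is genuinely different from the paper's, which is a one-liner: the paper quotes Skjelnes's result (Theorem~\ref{thm:roy1}), identifying $\operatorname{Quot}^n_{\fO^{\oplus p}_{\P(V)}/\P(V)/S}$ with the infinite intersection $\bigcap_{s\ge1}V\bigl(\Fitt_{n-1}(\mathcal{E}_{d+s})\bigr)$ inside $G$, and then applies Corollary~\ref{cor:cor} to discard all loci but the first. You instead try to re-derive the whole functorial comparison on $T$-points, and this is where the proposal has a genuine gap: your second paragraph compresses precisely the hard content of the cited theorem into the unsupported sentence ``such an $\mathcal{F}$ exists precisely when $Q_t$ is locally free of rank $n$ for every $t\ge d$''. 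Neither direction of that equivalence is a formal consequence of $d\ge n$ being the Gotzmann number. The backward direction needs flatness of the sheafified graded module, cohomology and base change to identify $f_{T,*}\mathcal{F}(t)$ with $Q_t$, and injectivity of the assignment $\mathcal{F}\mapsto f_{T,*}\mathcal{F}(d)$; the forward direction needs to know that the submodule generated in degree $d$ by $\phi^\ast\mathcal{R}_d$ already yields a locally free rank-$n$ quotient in all higher degrees, which fiberwise requires a Macaulay bound together with Corollary~\ref{cor:gash} and over $T$ a Nakayama argument. This is exactly what \cite{QuotFitting} establishes, and you neither prove it nor cite it. The economical repair is to invoke Theorem~\ref{thm:roy1} as the paper does, after which the only remaining point is the containment $V\bigl(\Fitt_{n-1}(\mathcal{E}_{d+1})\bigr)\subseteq V\bigl(\Fitt_{n-1}(\mathcal{E}_{d+s})\bigr)$, which is where Theorem~\ref{thm:a} (Corollary~\ref{cor:cor}) does its work.

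The second gap you flag yourself, the Macaulay-type growth bound for modules, is real but fillable, and some form of it is needed on either route, since it is what upgrades $\Fitt_{n-1}(\phi^\ast\mathcal{E}_{d+1})=0$ to local freeness of rank $n$. In the constant range $n\le d$ it has an elementary proof close to the reduction you guess at: filter $M=\bigoplus_{i=1}^pS$ by the partial sums $\bigoplus_{i\le j}Se_i$; the induced filtration of $Q=M/N$ has subquotients of the form $S/J_j$ for homogeneous ideals $J_j$ (no hypothesis on their generating degrees is needed, since Macaulay's one-step growth bound holds for arbitrary homogeneous ideals), so $\dim_kQ_{d+1}=\sum_j\dim_k(S/J_j)_{d+1}\le\sum_j\bigl(\dim_k(S/J_j)_d\bigr)^{\langle d\rangle}=\sum_j\dim_k(S/J_j)_d=n$, using $a^{\langle d\rangle}=a$ for $a\le d$; alternatively cite the module versions due to Hulett or Gasharov \cite{MR1468870}. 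With that bound supplied and with the functorial identification replaced by the citation of Theorem~\ref{thm:roy1}, your argument closes up and uses Theorem~\ref{thm:a} in the same place the paper does.
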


Our proof of Theorem~\ref{thm:a} depends on the study of Fitting ideals. In particular, we introduce a technique for calculating these ideals for modules with constant Hilbert polynomial. This technique makes it possible to relate Fitting ideals of different graded components of a quotient. Another approach that could give a less explicit proof of Theorem~\ref{thm:a} would be to combine the results of Gasharov \cite{MR1468870} with the techniques of Gotzmann \cite[Section~1]{MR0480478} after finding an upper bound on the regularity of the associated subsheaves. One such bound is given by the Gotzmann number studied in \cite{2014arXiv1410.8612D}, but for modules with arbitrary Hilbert polynomial this number can exceed the degree in which the corresponding submodule is generated. An interesting future approach would be to study if the regularity is in fact bounded by the degree, as is the case for ideals.

However, our method does not only give a constructible proof of Theorem~\ref{thm:a} but also gives a computer efficient algorithm for computing these Fitting ideals. This algorithm is implemented in the package \emph{FiniteFittingIdeals} \cite{gssm2} for the computer program Macaulay2~\cite{M2}.

\par\vspace{\baselineskip}
\noindent\textbf{Acknowledgement.} I am thankful to Roy Skjelnes for introducing me to the problem and for sharing his ideas on this work. David Rydh has given invaluable comments and has helped with the presentation of the results. Finally, I thank Aron Wennman for our discussions that helped with this paper.

\section{Fitting ideals of quotients in degree $d+1$}\label{sec:2}\label{sec:jajaja}
Let $A$ be a ring and consider a finitely generated $A$-module $L$ with a free presentation $F_2\overset{\psi}{\to} F_1\to L\to0$, where $F_1$ has rank $r$. Then, the $i$:th Fitting ideal $\Fitt_i(L)$ of $L$ is the ideal generated by the $(r-i)$-minors of the matrix corresponding to $\psi$. This ideal is independent of the choice of presentation. The reason that we are interested in these ideals is due to the following result.
\begin{proposition}[{\cite[Proposition~20.8]{eisen-comalg}}]\label{prop:efitting}
Let $A$ be a local ring and $L$ a finitely generated $A$-module. Then, $L$ is free of constant rank $n$ if and only if $\Fitt_n(L)=A$ and $\Fitt_{n-1}(L)=0$.
\end{proposition}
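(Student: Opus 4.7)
The plan is to prove both implications by working with minimal free presentations, exploiting the locality of $A$ through Nakayama's lemma, and taking the invariance of Fitting ideals under change of presentation as a standard input.

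For the forward implication, suppose $L$ is free of rank $n$. The plan is to compute $\Fitt_i(L)$ using the tautological presentation $0\to A^n\to L\to 0$. Here $F_1=A^n$ so $r=n$; the $0\times 0$ minors of the empty matrix equal $1$, giving $\Fitt_n(L)=A$, while the $1\times 1$ minors are the (non-existent) entries of the zero presentation map, giving $\Fitt_{n-1}(L)=0$.

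For the converse, the plan is to pick a \emph{minimal} free presentation $F_2\overset{\psi}{\to}F_1\to L\to 0$, which exists since $A$ is local: let $F_1=A^r$ where $r=\dim_k(L\otimes_A k)$ for $k$ the residue field. The key structural input is that, because the presentation is minimal, every entry of the matrix of $\psi$ lies in the maximal ideal $\mathfrak{m}$; otherwise one could delete a generator of $L$, contradicting minimality. Hence every $m\times m$ minor of $\psi$ with $m\ge 1$ lies in $\mathfrak{m}$, so no such minor is a unit.

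From the hypothesis $\Fitt_n(L)=A$, I would deduce that $r\le n$: otherwise the generators of $\Fitt_n(L)$ are $(r-n)\times(r-n)$ minors with $r-n\ge 1$, hence all lie in $\mathfrak{m}$, contradicting $\Fitt_n(L)=A$. Next, if $r<n$ strictly, then $r-(n-1)\le 0$ and by convention $\Fitt_{n-1}(L)=A$, contradicting $\Fitt_{n-1}(L)=0$. Therefore $r=n$, and the assumption $\Fitt_{n-1}(L)=0$ says that all $1\times 1$ minors, i.e.\ all entries of $\psi$, vanish. Thus $\psi=0$ and $L\cong F_1\cong A^n$ is free of rank $n$. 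The main subtlety I would want to be careful about is the bookkeeping around the Fitting ideal conventions when $r-i\le 0$ or exceeds the size of $\psi$; once those are pinned down, the argument is a direct application of Nakayama to a minimal presentation.
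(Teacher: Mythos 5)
Your proof is correct: the forward direction via the tautological presentation $0\to A^n\to L\to 0$ and the converse via a minimal presentation (Nakayama forces all entries of $\psi$ into $\mathfrak{m}$, then $\Fitt_n(L)=A$ gives $r\le n$, the convention for minors of nonpositive size rules out $r<n$, and $\Fitt_{n-1}(L)=0$ then kills every entry of $\psi$, so $L\cong A^n$) is exactly the standard argument. The paper itself offers no proof of this statement, quoting it from Eisenbud (Proposition~20.8), and your argument is essentially the one given there, with the relevant conventions on undersized and oversized minors correctly handled.
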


In this section and the next, we will have the following assumptions. 
\begin{paragraf}\label{setup}
Let $A$ be a local ring, let $S=A[x_0,\dots,x_r]$, and let $M=\bigoplus_{i=1}^p S$. Let $N\subset M$ be a graded submodule, generated in degrees at most $d$, and write $Q=M/N$. Suppose that $Q_d$ is free of {rank~$n$}, with a basis given by the images of the elements of a set of monomials $\Lambda_{d}=\{\mu_1,\dots,\mu_n\}\subset M_d$, where $\Lambda_{d}$ has the following two properties:
\begin{equation}\label{eq:property}\tag{$\ast$}
\begin{cases}\text{if $\mu\in\Lambda_d$, then $x_0$ divides $\mu$,}\\\text{if $x_i$ divides $\mu\in\Lambda_d$, then $\frac{x_0}{x_i}\mu\in\Lambda_d$.}\end{cases}
\end{equation} 
We denote the first of these  $(\ast_1)$, and the second $(\ast_2)$. By a \emph{monomial}, we mean a product $m=x_0^{d_0}\cdots x_r^{d_r}e_i$ of the variables $x_0,\dots,x_r$ in the $i$:th component $S$ of $M$ for $i=1,\dots,p$. 
\end{paragraf}

\begin{remark}
Sets with property \eqref{eq:property} corresponds to modules with simple Hilbert functions. In Gotzmann's original statement of the theorem, see \cite{MR0480478}, he considered an ideal generated by the first number of monomials in a lexicographic order. If we choose a lexicographic order where $x_0$ is smallest, then the set of the first $n$ monomials in this order has property \eqref{eq:property}.  A set with property \eqref{eq:property} is also related to the notion of \emph{Gotzmann sets}, which has been used to give a combinatorial proof of Gotzmann's persistence theorem for monomial ideals \cite{MR2368639}.
\end{remark}

Our goal of this section will be to study the $(n-1)$:th Fitting ideal of $Q_{d+1}$. 
Write $\Lambda_d^{\mathsf{c}}$ for the set of monomials in $M_{d}\setminus\Lambda_d$. Then, the kernel $N_d$ of $M_d\to Q_d$ is free with a basis consisting of elements 
\begin{equation*}
\alpha_m=m-\sum_{\nu=1}^na_{\nu}(m)\mu_\nu,
\end{equation*} for some $a_{\nu}(m)\in A$, for all monomials $m\in \Lambda_d^{\mathsf{c}}$. In the sequel, we will write $a_\nu(m)=a_\nu$. Let $\Phi_{d+1}=\{(i,m)\mid m\in\Lambda_d^{\mathsf{c}},\ i=0,\dots,r\}$, and let $A^{\Phi_{d+1}}$ be the free module on $\Phi_{d+1}$. 
\begin{lemma}\label{lem_12345}
With the assumptions of {{\ref{setup}}}, the module $Q_{d+1}$ has a free presentation
\[\xymatrix{A^{\Phi_{d+1}}\ar[r]^{\phi_{d+1}}&M_{d+1}\ar[r]&Q_{d+1}\ar[r]&0,}\]
where $\phi_{d+1}(e_{i,m})=x_i\alpha_m$ for every $(i,m)\in\Phi_{d+1}$.
\end{lemma}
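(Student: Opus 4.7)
My plan is to verify exactness at $M_{d+1}$, i.e., that the image of $\phi_{d+1}$ is precisely the kernel $N_{d+1}$ of the surjection $M_{d+1}\to Q_{d+1}$. Surjectivity of the right-hand map is part of the definition of $Q$, so the whole content of the lemma is the equality $\operatorname{im}(\phi_{d+1})=N_{d+1}$.

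First I would establish that the elements $\{\alpha_m\mid m\in\Lambda_d^{\mathsf{c}}\}$ form an $A$-basis of $N_d$. Since $Q_d$ is free over the local ring $A$ with basis given by the images of the $\mu_\nu$, the short exact sequence
\[0\longrightarrow N_d\longrightarrow M_d\longrightarrow Q_d\longrightarrow0\]
splits, and the composition $M_d\to Q_d$ together with the section sending the basis of $Q_d$ to the $\mu_\nu$ exhibits $M_d$ as the direct sum of the span of $\Lambda_d$ and the complementary summand generated by $\{\alpha_m\}_{m\in\Lambda_d^{\mathsf{c}}}$. The latter summand is exactly $N_d$, and the $\alpha_m$ form an $A$-basis of it because each $\alpha_m$ contains the monomial $m\in\Lambda_d^{\mathsf{c}}$ with coefficient $1$ and no other monomials from $\Lambda_d^{\mathsf{c}}$.

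Next I would show the key identity $N_{d+1}=S_1\cdot N_d$. The inclusion $\supseteq$ is immediate since $N$ is a graded $S$-submodule. For $\subseteq$, I use the hypothesis that $N$ is generated in degrees at most $d$: any homogeneous element of $N_{d+1}$ can be written as $\sum p_j n_j$ with $n_j\in N_{e_j}$ a generator of degree $e_j\le d$ and $p_j\in S_{d+1-e_j}$. Writing $p_j=\sum_{i=0}^r x_i\, q_{i,j}$ with $q_{i,j}\in S_{d-e_j}$ gives $p_j n_j=\sum_i x_i(q_{i,j}n_j)$, and each $q_{i,j}n_j$ lies in $N_d$; hence $N_{d+1}\subseteq S_1\cdot N_d$. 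Combining this with the first step, $N_{d+1}$ is generated as an $A$-module by $\{x_i\alpha_m\mid (i,m)\in\Phi_{d+1}\}$, which is exactly the image of $\phi_{d+1}$. This proves exactness at $M_{d+1}$, and the free module $A^{\Phi_{d+1}}$ on the indexing set $\Phi_{d+1}$ provides the asserted free presentation.

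There is no real obstacle here; the argument is bookkeeping once the correct complement of $N_d$ inside $M_d$ has been identified. The only point that requires care is to explicitly invoke the hypothesis that $N$ is generated in degrees at most $d$ at the step where one reduces $N_{d+1}$ to $S_1\cdot N_d$, since without that hypothesis the presentation would need to involve generators of $N$ in higher degrees as well.
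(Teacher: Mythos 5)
Your proof is correct and follows essentially the same route as the paper: the paper's proof simply observes that $S_1\otimes_A N_d$ (free on the $x_i\otimes\alpha_m$, since $N_d$ is free on the $\alpha_m$) surjects onto $N_{d+1}$ because $N$ is generated in degrees at most $d$, which is exactly the content of your two steps, spelled out in more detail.
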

\begin{proof}
The module $S_1\otimes_AN_d$ is free with the basis $\{x_i\otimes\alpha_m\mid m\in\Lambda_d^{\mathsf{c}}, i=0,\dots,r\}$ and, since $N$ is generated in degress at most $d$, gives a free presentation of $Q_{d+1}$ as
\[\xymatrix{S_1\otimes_AN_d\ar[r]& M_{d+1}\ar[r]&Q_{d+1}\ar[r]&0.}\]
The canonical isomorphism $S_1\otimes_AN_d\cong A^{\Phi_{d+1}}$ gives the map $\phi_{d+1}$.
\end{proof}
The Fitting ideals of $Q_{d+1}$ are therefore generated by minors of given sizes of the matrix corresponding to $\phi_{d+1}$. As Fitting ideals are independent of the choice of presentation, we will now do a particular change of basis of $A^{\Phi_{d+1}}$, giving a new presentation of $Q_{d+1}$, which simplifies the calculation of the Fitting ideal. 
For any monomial $m''\in M_{d+1}$, we define $\iota(m'')=i$, where $i$ is the smallest integer such that $x_i$ divides $m''$. 
We also define 
\[\Omega_{d+1}=\{(i,m)\mid m\in\Lambda_d^{\mathsf{c}}, \iota(x_im)=i\}\subset \Phi_{d+1}\]
and $\Psi_{d+1}=\Phi_{d+1}\setminus\Omega_{d+1}$. Then there is a canonical isomorphism $A^{\Phi_{d+1}}\cong A^{\Omega_{d+1}}\oplus A^{\Psi_{d+1}}$. Furthermore, we let $\Lambda_{d+1}=\{x_0\mu_1,\dots,x_0\mu_n\}$.
\begin{proposition}\label{prop:dirsumma}
With the assumptions of {{\ref{setup}}}. Let $\langle\Omega_{d+1}\rangle=\phi_{d+1}\bigl(A^{\Omega_{d+1}}\bigr)\subseteq M_{d+1}$, and let $\langle \Lambda_{d+1}\rangle\subseteq M_{d+1}$ denote the submodule generated by $\Lambda_{d+1}$. Then, the free $A$-module $M_{d+1}$ splits as 
\[M_{d+1}=\langle\Omega_{d+1}\rangle\oplus \langle\Lambda_{d+1}\rangle.\]
\end{proposition}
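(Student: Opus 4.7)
The plan is to construct an isomorphism
\[\tau\colon A^{\Omega_{d+1}}\oplus A^{\Lambda_{d+1}}\longrightarrow M_{d+1}\]
defined as $\phi_{d+1}$ on the first summand and as the map sending each basis element $x_0\mu_j$ to itself on the second. Since the image of $\tau$ restricted to each summand is respectively $\langle\Omega_{d+1}\rangle$ and $\langle\Lambda_{d+1}\rangle$, proving $\tau$ is an isomorphism gives the desired internal direct sum $M_{d+1}=\langle\Omega_{d+1}\rangle\oplus\langle\Lambda_{d+1}\rangle$.

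First I would check that source and target are free of the same rank via the following combinatorial bijection. For every monomial $m''\in M_{d+1}$, set $i=\iota(m'')$ and $m'=m''/x_i$. If $m'\in\Lambda_d$, then Remark~\ref{rem:x0} forces $x_0\mid m'$, so $i=0$ and $m''=x_0\mu_j\in\Lambda_{d+1}$. If instead $m'\in\Lambda_d^{\mathsf{c}}$, then $(i,m')\in\Omega_{d+1}$ by definition. The uniqueness of the factorization $m''=x_{\iota(m'')}\cdot(m''/x_{\iota(m'')})$ shows this partitions the monomial basis of $M_{d+1}$ as the disjoint union $\Lambda_{d+1}\sqcup\Omega_{d+1}$, so source and target agree in rank.

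Second I would prove $\tau$ is surjective. The monomials in $\Lambda_{d+1}$ are trivially in the image. A monomial $x_im$ with $(i,m)\in\Omega_{d+1}$ satisfies
\[x_im=\phi_{d+1}(e_{i,m})+\sum_{\nu=1}^n a_\nu x_i\mu_\nu,\]
so it remains to see that each $x_i\mu_\nu$ is in the image. For $i=0$ this lies in $\Lambda_{d+1}$. For $i>0$ Remark~\ref{rem:x0} gives $x_0\mid\mu_\nu$, hence $\iota(x_i\mu_\nu)=0$ and $x_i\mu_\nu=x_0\cdot(x_i\mu_\nu/x_0)$ is either already in $\Lambda_{d+1}$ or of the shape $x_0m'$ with $m'\in\Lambda_d^{\mathsf{c}}$, which reduces to the $i=0$ case already treated. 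A surjection between free modules of the same finite rank over a commutative ring is automatically an isomorphism, finishing the argument.

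The main subtlety is the counting step: the argument that every monomial of $M_{d+1}$ either lies in $\Lambda_{d+1}$ or corresponds to a unique element of $\Omega_{d+1}$, with no overlap, relies crucially on Remark~\ref{rem:x0} that $x_0\mid\mu$ for every $\mu\in\Lambda_d$, which itself is a consequence of property~\eqref{eq:property}. Once this bijection is secured, the rest is direct bookkeeping with the defining relations $\alpha_m=m-\sum_\nu a_\nu\mu_\nu$ of $N_d$.
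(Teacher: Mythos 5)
Your proof is correct, and its computational core is the same as the paper's: both arguments rest on the bijection, via $\iota$ and Remark~\ref{rem:x0}, between the monomials of $M_{d+1}$ and the disjoint union $\Lambda_{d+1}\sqcup\Omega_{d+1}$, and on the same rewriting of the correction terms $x_i\mu_\nu=x_0m_\nu$ with $m_\nu\in\Lambda_d^{\mathsf{c}}$, followed by one application of $x_0\alpha_{m_\nu}$, whose own correction terms lie in $\Lambda_{d+1}$ so the reduction terminates. Where you genuinely differ is the wrap-up. The paper makes the reduction into an explicit change of basis: it produces elements $q_{i,m}=x_im-\sum_{\nu}b_\nu x_0\mu_\nu$ spanning $\langle\Omega_{d+1}\rangle$, so that $\{q_{i,m}\}\cup\Lambda_{d+1}$ differs from the monomial basis of $M_{d+1}$ by a unitriangular transformation and the splitting is read off directly. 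You instead assemble the map $\tau\colon A^{\Omega_{d+1}}\oplus A^{\Lambda_{d+1}}\to M_{d+1}$, prove surjectivity, match ranks via the combinatorial bijection, and invoke the standard fact that a surjection between free modules of the same finite rank over a commutative ring is an isomorphism; as a small bonus this also shows that $\phi_{d+1}$ is injective on $A^{\Omega_{d+1}}$, i.e.\ that $\langle\Omega_{d+1}\rangle$ is free of rank $|\Omega_{d+1}|$, without exhibiting a basis. Your route is a bit slicker for the bare statement and avoids checking that the subtractions are a legitimate change of generators, but it does not produce the coefficients $b_\nu$; the explicit basis $q_{i,m}$ is precisely what the paper reuses in the computation technique of \ref{sec:testas} and generalizes (with the preorder and iterative reduction) in Proposition~\ref{prop:rerer}, so for the later arguments the constructive form has a real advantage. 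The only stylistic caveat is your phrase that the case $x_0m'$ with $m'\in\Lambda_d^{\mathsf{c}}$ ``reduces to the $i=0$ case already treated'': strictly, that case was $x_0\mu_\nu\in\Lambda_{d+1}$, and what you mean is that applying the defining relation once more with $i=0$ leaves only terms in $\Lambda_{d+1}$; stating this as an explicit two-step (or inductive) reduction would remove any appearance of circularity.
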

\begin{proof}
By property $(\ast_2)$, we have that any monomial of $M_{d+1}$ lies in precisely one of the sets $\Lambda_{d+1}$ and $\{x_im\mid (i,m)\in\Omega_{d+1}\}$. The submodule $\langle\Omega_{d+1}\rangle$ is generated by the elements $x_i\alpha_m=x_im-\sum_{\nu=1}^na_\nu x_i\mu_\nu$, where $a_\nu\in A$, for all $(i,m)\in\Omega_{d+1}$. If $i=0$, then we define $q_{0,m}=x_0\alpha_m$. On the other hand, if $i\neq0$, then, by property $(\ast_1)$, any monomial $x_i\mu_\nu$ of $x_i\alpha_m$ is of the form $x_0m_\nu$ where $m_\nu=\frac{x_i\mu_\nu}{x_0}$. By subtracting the terms where $m_{\nu}\not\in\Lambda_{d}$, 
we define \[q_{i,m}=x_i\alpha_m-\!\!\!\sum_{m_\nu\in\Lambda_{d}^{\mathsf{c}}}\!\!\!a_{\nu} x_0\alpha_{m_\nu}=x_im-\sum_{\nu=1}^nb_\nu x_0\mu_\nu,\]
where $b_\nu\in A$. 
Thus, the image $\langle\Omega_{d+1}\rangle$ is free with basis $q_{i,m}=x_im-\sum_{\nu=1}^nb_\nu x_0\mu_\nu$ for all $(i,m)\in\Omega_{d+1}$. Now, as any monomial of $M_{d+1}$ either lies in $\Lambda_{d+1}$ or is equal to a difference of $q_{i,m}$ and a linear combination of monomials in $\Lambda_{d+1}$, the result follows.
\end{proof}
Letting $\pi_2\colon\langle\Omega_{d+1}\rangle\oplus\langle\Lambda_{d+1}\rangle \to\langle\Lambda_{d+1}\rangle$ denote the projection, we get the following immediate consequence.
\begin{corollary}\label{cor:dplus1}
The composition $\psi_{d+1}=\pi_2\circ \phi_{d+1}\colon A^{\Psi_{d+1}}\to\langle\Lambda_{d+1}\rangle$ gives a presentation of $Q_{d+1}$ as
\vspace{-4pt}\[\xymatrix{
A^{\Psi_{d+1}}\ar[r]^-{\psi_{d+1}}&\langle\Lambda_{d+1}\rangle\ar[r]& Q_{d+1}\ar[r]&0.}\]
\end{corollary}
By Proposition~\ref{prop:efitting} and the fact that $\langle\Lambda_{d+1}\rangle$ is free of rank $n$, we get the following.
\begin{corollary}
The $A$-module $Q_{d+1}$ is free of rank $n$ if and only if ${\psi_{d+1}\colon A^{\Psi_{d+1}}\to \langle\Lambda_{d+1}\rangle}$ is the zero map. Hence, the $(n-1)$:th Fitting ideal is generated by the entries of the matrix corresponding to $\psi_{d+1}$. 
\end{corollary}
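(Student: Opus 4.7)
The plan is to deduce the corollary directly from Proposition~\ref{prop:efitting} applied to the presentation produced in Corollary~\ref{cor:dplus1}, using that Fitting ideals are independent of the chosen presentation.

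First I would record that by Corollary~\ref{cor:dplus1} we have the free presentation
\[
A^{\Psi_{d+1}}\xrightarrow{\ \psi_{d+1}\ }\langle\Lambda_{d+1}\rangle\longrightarrow Q_{d+1}\longrightarrow 0,
\]
and the target $\langle\Lambda_{d+1}\rangle$ is free of rank $n$ since $\Lambda_{d+1}=\{x_0\mu_1,\dots,x_0\mu_n\}$ consists of $n$ distinct monomials of $M_{d+1}$. Therefore the matrix of $\psi_{d+1}$ has $n$ rows, and by the very definition of Fitting ideals the ideal $\Fitt_{n-j}(Q_{d+1})$ is generated by the $j$-minors of this matrix.

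Next I would specialize to $j=0$ and $j=1$. The $0$-minors are empty products and hence $\Fitt_n(Q_{d+1})=A$ unconditionally, while the $1$-minors are precisely the entries of the matrix of $\psi_{d+1}$, so $\Fitt_{n-1}(Q_{d+1})$ is the ideal generated by those entries. This already yields the second ("Hence") sentence of the corollary.

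For the first statement, I invoke Proposition~\ref{prop:efitting}: since $A$ is local (see \ref{setup}), $Q_{d+1}$ is free of rank $n$ iff $\Fitt_n(Q_{d+1})=A$ and $\Fitt_{n-1}(Q_{d+1})=0$. The first equality is automatic, and the second is equivalent to every entry of the matrix of $\psi_{d+1}$ being zero, i.e.\ to $\psi_{d+1}$ being the zero map. There is no real obstacle here: the only thing to be mildly careful about is making explicit that $\langle\Lambda_{d+1}\rangle$ has rank exactly $n$ (so that the indexing of Fitting ideals in Proposition~\ref{prop:efitting} matches the one coming from the presentation $\psi_{d+1}$), which follows because the elements of $\Lambda_{d+1}$ are distinct monomials in the free $A$-module $M_{d+1}$.
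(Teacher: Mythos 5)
Your proof is correct and is essentially the paper's argument: the paper also deduces the corollary from Proposition~\ref{prop:efitting} together with the presentation of Corollary~\ref{cor:dplus1} and the fact that $\langle\Lambda_{d+1}\rangle$ is free of rank $n$. Your added care about $\Fitt_n(Q_{d+1})=A$ and the indexing of the Fitting ideals just makes explicit what the paper leaves implicit.
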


\begin{paragraf}\label{sec:testas} We will now describe a technique for computing the $(n-1)$:th Fitting ideal of $Q_{d+1}$ by  explicitly writing down the sums $\psi_{d+1}(e_{j,m'})$ for all $(j,m')\in\Psi_{d+1}$. Consider a pair $(j,m')\in\Psi_{d+1}$. Let $\iota(x_jm')=i< j$ and write $m=\frac{x_jm'}{x_i}\in\Lambda_d^\mathsf{c}$, so that $(i,m)\in\Omega_{d+1}$. Then we define 
\begin{align*}\beta_{j,m'}=x_j\alpha_{m'}-x_i\alpha_{m}&=x_j\left(m'-\sum_{\nu=1}^na_\nu\mu_\nu\right)-x_i\left(m-\sum_{\nu=1}^nb_\nu\mu_\nu\right)=\\&=\sum_{\nu=1}^nb_\nu x_i\mu_\nu-\sum_{\nu=1}^n a_\nu x_j\mu_\nu.
\end{align*}
 By property $(\ast_1)$, every monomial in this sum
is either of the form $x_0\mu_\nu$ for some $\nu$, or of the form $x_0m''$ for some $m''\in\Lambda_d^{\mathsf{c}}$. 
Thus,
\[\beta_{j,m'}=
\sum_{\nu=1}^nc_\nu x_0\mu_\nu+\sum_{m''\in\Lambda_d^{\mathsf{c}}} c_{m''}x_0m'',\]
for some $c_\nu,c_{m''}\in A$.
Therefore, we write $\sigma_{j,m'}=\sum_{m''\in\Lambda_d^{\mathsf{c}}} c_{m''}x_0\alpha_{m''}$, and define
\[\gamma_{j,m'}=\beta_{j,m'}-\sigma_{j,m'}=
\sum_{\nu=1}^nf_\nu^{(j,m')}x_0\mu_\nu,\]
where $f_\nu^{(j,m')}\in A$. It follows that $\psi_{d+1}(e_{j,m'})=\gamma_{j,m'}$ for all $(j,m')\in\Psi_{d+1}$.
Hence, we have proved the following.
\begin{corollary}\label{cor:alsogood}
The ideal $\Fitt_{n-1}(Q_{d+1})$ is generated by all the coefficients $f_\nu^{(j,m')}$ of $\gamma_{j,m'}$ for all $(j,m')\in\Psi_{d+1}$.
\end{corollary}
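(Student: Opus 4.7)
The plan is to observe that everything has already been set up in the preceding display of computations, so that the corollary follows once we identify $\psi_{d+1}(e_{j,m'})$ with $\gamma_{j,m'}$ for every $(j,m')\in\Psi_{d+1}$. By the previous corollary, the $(n-1)$:th Fitting ideal of $Q_{d+1}$ is generated by the entries of the matrix of $\psi_{d+1}\colon A^{\Psi_{d+1}}\to\langle\Lambda_{d+1}\rangle$ expressed in the basis $\Lambda_{d+1}=\{x_0\mu_1,\dots,x_0\mu_n\}$, so it suffices to verify that $\psi_{d+1}(e_{j,m'})=\gamma_{j,m'}=\sum_\nu f_\nu^{(j,m')}x_0\mu_\nu$.

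First I would rewrite the definition of $\gamma_{j,m'}$ as the identity
\[\phi_{d+1}(e_{j,m'})=x_j\alpha_{m'}=x_i\alpha_m+\sigma_{j,m'}+\gamma_{j,m'},\]
where $i=\iota(x_jm')\neq j$ and $m=x_jm'/x_i\in\Lambda_d^{\mathsf{c}}$. The goal is then to show that the first two summands on the right lie in $\langle\Omega_{d+1}\rangle$, while $\gamma_{j,m'}$ lies in $\langle\Lambda_{d+1}\rangle$; by the direct sum decomposition of Proposition~\ref{prop:dirsumma}, applying the projection $\pi_2$ will extract exactly $\gamma_{j,m'}$, yielding $\psi_{d+1}(e_{j,m'})=\gamma_{j,m'}$ as desired.

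The membership $x_i\alpha_m\in\langle\Omega_{d+1}\rangle$ is immediate because $(i,m)\in\Omega_{d+1}$: we have $\iota(x_im)=\iota(x_jm')=i$ by the choice of $i$. For $\sigma_{j,m'}=\sum_{m''\in\Lambda_d^{\mathsf{c}}}c_{m''}x_0\alpha_{m''}$, every index pair $(0,m'')$ occurring belongs to $\Omega_{d+1}$, since $x_0$ is the smallest variable and therefore $\iota(x_0m'')=0$ automatically; hence $\sigma_{j,m'}\in\langle\Omega_{d+1}\rangle$ as well. That $\gamma_{j,m'}$ lies in $\langle\Lambda_{d+1}\rangle$ is built into its presentation as a linear combination of the $x_0\mu_\nu$.

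The only point that could cause friction is ensuring that the decomposition $\beta_{j,m'}=\sum_\nu c_\nu x_0\mu_\nu+\sum_{m''}c_{m''}x_0m''$ really captures all terms, i.e., that every monomial appearing in the right-hand side of the formula for $\beta_{j,m'}$ is divisible by $x_0$. This, however, is guaranteed by Remark~\ref{rem:x0}: each $\mu_\nu\in\Lambda_d$ is divisible by $x_0$, so the monomials $x_i\mu_\nu$ and $x_j\mu_\nu$ appearing in $\beta_{j,m'}$ are all divisible by $x_0$. Once this observation is in place, the remainder is bookkeeping, and collecting the coefficients $f_\nu^{(j,m')}$ over all $(j,m')\in\Psi_{d+1}$ produces exactly the generating set for $\Fitt_{n-1}(Q_{d+1})$.
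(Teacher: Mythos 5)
Your argument is correct and is essentially the paper's own: the paper's paragraph preceding the corollary defines $\beta_{j,m'}$, $\sigma_{j,m'}$, $\gamma_{j,m'}$ and asserts $\psi_{d+1}(e_{j,m'})=\gamma_{j,m'}$, which combined with the earlier corollary on the presentation $A^{\Psi_{d+1}}\to\langle\Lambda_{d+1}\rangle$ gives the statement. Your only addition is to spell out why $x_i\alpha_m$ and $\sigma_{j,m'}$ lie in $\langle\Omega_{d+1}\rangle$ and $\gamma_{j,m'}$ in $\langle\Lambda_{d+1}\rangle$ before applying $\pi_2$, which is exactly the justification the paper leaves implicit.
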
 
\end{paragraf}

\begin{ex}
Let $S=A[x,y]$ and let $M=Se_1\oplus Se_2$. Assume that $N\subseteq M$ is a graded $S$-submodule generated in degree $d=2$, and write $Q=M/N$. Suppose that $Q_2$ is free of rank~$n=2$ and that we can choose a basis of $Q_2$ such that the quotient $M_2\to Q_2$ 
is given by the matrix
\[\left(\begin{matrix}1&a_2&a_3&0&a_5&a_6\\0&b_2&b_3&1&b_5&b_6\end{matrix}\right),\]
where the columns correspond to the basis elements $x^2e_1,xye_1,y^2e_1,x^2e_2,xye_2,y^2e_2$ of $M_2$. Thus, with the notation of this section, we have that $\Lambda_2=\{x^2e_1,x^2e_2\}$. Then, the inclusion $N_2\hookrightarrow M_2$ is given by the matrix
\[\left(\begin{matrix}
\llap{{$x^2$}\hspace{15pt}}a_2&a_3&a_5&a_6\\
\llap{{$xy$}\hspace{15pt}}-1&0&0&0\\
\llap{{$y^2$}\hspace{20pt}}0&-1&0&0\\
\hline
\llap{{$x^2$}\hspace{18pt}}b_2&b_3&b_5&b_6\\
\llap{{$xy$}\hspace{20pt}}0&0&-1&0\\
\llap{{$y^2$}\hspace{12pt}}\blap{\shortstack{0\\\\\\$xye_1$}}& \blap{\shortstack{0\\\\\\$\smash{y^2}e_1$}}& \blap{\shortstack{0\\\\\\$xye_2$}}&\blap{\shortstack{$-1$\\\\\\$\smash{y^2}e_2$}}
\end{matrix}\right)\]
\ \\\\
\noindent where we have drawn a line to illustrate the two components of $M_2$. The rows above the line correspond to the monomials in the first component $Se_1$ of $M$ and the rows below the line correspond to the monomials of the second component $Se_2$.
Then, we consider the module $Q_3$ as the cokernel of the map $S_1\otimes_AN_2\to M_3$ given by the matrix
\setlength{\arraycolsep}{4pt}
\[\ \ \ \ \left(\begin{matrix}
\llap{{$x^3$}\hspace{25pt}}a_2	&0	&a_3	&0	&a_5	&0	&a_6	&0\\
\llap{{$x^2y$}\hspace{22pt}}-1	&a_2	&0	&a_3	&0	&a_5	&0	&a_6\\
\llap{{$xy^2$}\hspace{28pt}}0	&-1	&-1	&0	&0	&0	&0	&0\\
\llap{{$y^3$}\hspace{28pt}}0	&0	&0	&-1	&0	&0	&0	&0\\
\hline
\llap{{$x^3$}\hspace{26pt}}b_2	&0	&b_3&0	&b_5&0	&b_6&0\\
\llap{{$x^2y$}\hspace{29pt}}0	&b_2&0	&b_3&-1	&b_5&0	&b_6\\
\llap{{$xy^2$}\hspace{27pt}}0	&0	&0	&0	&0	&-1	&-1	&0\\
\llap{{$y^3$}\hspace{10pt}}\bblap{\shortstack{0\\\\\\$x\otimes xye_1$}}	&\blap{\shortstack{0\\\\\\$y\otimes xye_1$}}	&\blap{\shortstack{0\\\\\\$x\otimes \smash{y^2}e_1$}}	&\blap{\shortstack{0\\\\\\$y\otimes \smash{y^2e_1}$}}	&\blap{\shortstack{0\\\\\\$x\otimes xye_2$}}	&\blap{\shortstack{0\\\\\\$y\otimes xye_2$}}	&\blap{\shortstack{0\\\\\\$x\otimes \smash{y^2}e_2$}}	&\blap{\shortstack{$-1$\\\\\\$y\otimes \smash{y^2}e_2$}}
\end{matrix}\right).\]
\ \\\\
\noindent The algorithm that we presented to calculate the Fitting ideals corresponds to column reductions in this matrix: If we have two $(-1)$:s in the same row, we delete the one to the left, since such a $(-1)$ corresponds to an element of $\Psi_3$. After that, we use the other columns to reduce the column without a $(-1)$ to have zeros in every row except the rows corresponding to $\Lambda_3=\{x^3e_1,x^3e_2\}$. Doing these reductions gives the new matrix
\[\left(\begin{matrix}
a_2	&-a_3+a_5b_2+a_2^2	&a_3					&0	&a_5	&-a_6+a_5b_5+a_2a_5&a_6&0\\
-1	&0					&0					&a_3	&0	&0			       	     &0&a_6\\
0	&0					&-1					&0	&0	&0				     &0&0\\
0	&0					&0					&-1	&0	&0				     &0&0\\
\hline
b_2	&-b_3+b_2b_5+a_2b_2	&b_3				&0	&b_5&-b_6+b_5^2+b_2a_5 &b_6&0\\
0	&0					&0					&b_3&-1	&0				     &0&b_6\\
0	&0					&0					&0	&0	&0				     &-1&0\\
0	&0					&0					&0	&0	&0				     &0&-1
\end{matrix}\right).\]
From here, we can immediately read of the generators of $\Fitt_{1}\bigl(Q_3\bigr)$ as 
\[-a_3+a_5b_2+a_2^2,\ -b_3+b_2b_5+a_2b_2,\ -a_6+a_5b_5+a_2a_5,\ -b_6+b_5^2+b_2a_5.\exendhere\]
\end{ex}
\section{Fitting ideals of quotients in higher degree}\label{sec:oomtyckt}
With the notation of the previous section, we will now study the $(n-1)$:th Fitting ideal of $Q_{d+s}$ for $s\ge 1$.
\begin{definition}
For $s\ge1$, we let $\mathcal{I}_{s}$ be the set of ordered $s$-tuples $\i=(i_1,\dots,i_s)$ such that $0\le i_1\le \dots\le i_s\le r$. We give $\mathcal{I}_{s}$ the lexicographical order. Given $\i\in \mathcal{I}_{s}$, we  write $\x_\i=x_{i_1}\cdots x_{i_s}\in S=A[x_0,\dots,x_r]$.
\end{definition}
Let $\Phi_{d+s}=\{(\i,m)\mid \i\in \mathcal{I}_{s},m\in\Lambda_d^{\mathsf{c}}\}$, and let $A^{\Phi_{d+s}}$ be the free module on $\Phi_{d+s}$.
\begin{lemma}\label{lem_1234567}
With the assumptions of {{\ref{setup}}}, the module $Q_{d+s}$ has a free presentation
\[\xymatrix{A^{\Phi_{d+s}}\ar[r]^{\phi_{d+s}}&M_{d+s}\ar[r]&Q_{d+s}\ar[r]&0,}\]
where $\phi_{d+s}(e_{\i,m})=\x_\i\alpha_m$ for every $(\i,m)\in\Phi_{d+s}$.
\end{lemma}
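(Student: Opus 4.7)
The plan is to imitate the proof of Lemma~\ref{lem_12345}, replacing $S_1$ by $S_s$. The extra content compared to the $s=1$ case is that we must know $N_{d+s}$ is the image of $S_s \otimes_A N_d$; once this is established, the construction of $\phi_{d+s}$ is immediate.

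First I would verify that $N_{d+s} = S_s \cdot N_d$. Since $N$ is generated in degrees at most $d$, for every $e \le d$ the degree-$(d+s)$ part of $N$ receives contributions of the form $S_{d+s-e} \cdot N_e$. Using the polynomial-ring identity $S_a \cdot S_b = S_{a+b}$ we can rewrite $S_{d+s-e} = S_s \cdot S_{d-e}$, and $S_{d-e} \cdot N_e \subseteq N_d$ since $N$ is graded; hence each such contribution lies in $S_s \cdot N_d$. Summing over $e$ yields $N_{d+s} \subseteq S_s \cdot N_d$, and the reverse inclusion is automatic. (Alternatively, induct on $s$: the case $s=1$ is the content of Lemma~\ref{lem_12345}, and $N_{d+s} = S_1 \cdot N_{d+s-1} = S_1 \cdot S_{s-1} \cdot N_d = S_s \cdot N_d$.)

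Next, I would assemble the presentation. From \ref{setup} and Section~\ref{sec:jajaja}, $N_d$ is a free $A$-module with basis $\{\alpha_m\}_{m \in \Lambda_d^{\mathsf{c}}}$, while $S_s$ is a free $A$-module with basis $\{\x_\i\}_{\i \in \mathcal{I}_s}$. Hence $S_s \otimes_A N_d$ is free on $\{\x_\i \otimes \alpha_m\}$, and the bijection $e_{\i,m} \leftrightarrow \x_\i \otimes \alpha_m$ gives a canonical isomorphism $A^{\Phi_{d+s}} \cong S_s \otimes_A N_d$. The $A$-linear multiplication map $S_s \otimes_A N_d \to M_{d+s}$, $\x_\i \otimes \alpha_m \mapsto \x_\i \alpha_m$, lands inside $N_{d+s}$ and is surjective onto it by the previous paragraph. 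Composing with $A^{\Phi_{d+s}} \cong S_s \otimes_A N_d$ produces the map $\phi_{d+s}$ with $\phi_{d+s}(e_{\i,m}) = \x_\i \alpha_m$ and image $N_{d+s}$. The exact sequence
\[
A^{\Phi_{d+s}} \xrightarrow{\phi_{d+s}} M_{d+s} \longrightarrow Q_{d+s} \longrightarrow 0
\]
then follows from the defining short exact sequence $0 \to N_{d+s} \to M_{d+s} \to Q_{d+s} \to 0$.

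The argument is essentially routine bookkeeping; there is no serious obstacle. The only point that merits explicit mention is the reduction $N_{d+s} = S_s \cdot N_d$, which requires using that generators of $N$ of degree strictly less than $d$ still contribute to $N_d$ after multiplication by $S_{d-e}$—this is the reason the hypothesis "generated in degrees at most $d$" suffices, and it is precisely the step where the polynomial-ring identity $S_a \cdot S_b = S_{a+b}$ is used.
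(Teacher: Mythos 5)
Your proposal is correct and follows the same route as the paper: identify $A^{\Phi_{d+s}}$ with the free module $S_s\otimes_A N_d$ on the basis $\{\x_\i\otimes\alpha_m\}$ and present $Q_{d+s}$ as the cokernel of the multiplication map into $M_{d+s}$. The only difference is that you spell out the fact $N_{d+s}=S_s\cdot N_d$ (via generation of $N$ in degrees at most $d$), which the paper leaves implicit; this is a reasonable addition but not a new idea.
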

\begin{proof}
The module $S_s\otimes N_d$ is free with basis elements $\x_\i\otimes\alpha_m$, where $\mathbf{i}=(i_1,\dots,i_s)\in \mathcal{I}_s$ and $m\in\Lambda_d^{\mathsf{c}}$. 
The quotient $Q_{d+s}$ is the cokernel of the induced map 
$S_s\otimes N_d\to M_{d+s}$. 
From the canonical isomorphism $A^{\Phi_{d+s}}\cong S_s\otimes_AN_d$ we get the desired presentation.
\end{proof}
For any monomial $m''\in M_{d+s}$, we define $\iota_s(m'')=\mathbf{i}=(i_1,\dots,i_s)$ where $i_1\le\dots\le i_s$ are the minimal integers such that $\mathbf{x}_{\mathbf{i}}$ divides $m''$. Let 
\[\Omega_{d+s}=\{(\i,m)\mid m\in\Lambda_{d}^{\mathsf{c}}, \iota_s(\x_{\i}m)=\i\}\subseteq\Phi_{d+s}\]
and $\Psi_{d+s}=\Phi_{d+s}\setminus\Omega_{d+s}$. Also, we let $\Lambda_{d+s}=\{x_0^s\mu_1,\dots,x_0^s\mu_n\}$.
\begin{proposition}\label{prop:rerer}
Take the assumptions of {{\ref{setup}}}. Let $\langle\Omega_{d+s}\rangle=\phi_{d+s}\bigl(A^{\Omega_{d+s}}\bigr)\subseteq M_{d+s}$ and let $\langle \Lambda_{d+s}\rangle\subseteq M_{d+s}$ denote the submodule generated by $\Lambda_{d+s}$. Then, the free $A$-module $M_{d+s}$ splits as 
\[M_{d+s}=\langle\Omega_{d+s}\rangle\oplus \langle\Lambda_{d+s}\rangle.\]
\end{proposition}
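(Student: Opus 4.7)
The plan is to mimic and generalize the construction in Proposition~\ref{prop:dirsumma}, producing for each $(\i,m)\in\Omega_{d+s}$ an explicit element $q_{\i,m}\in\langle\Omega_{d+s}\rangle$ of the form
\[q_{\i,m}=\x_\i m-\sum_{\nu=1}^n b_\nu^{(\i,m)}\,x_0^s\mu_\nu,\]
so that the image of $q_{\i,m}$ in $M_{d+s}/\langle\Lambda_{d+s}\rangle$ is the monomial $\x_\i m$. The direct sum decomposition will then follow from a bijection between the monomials of $M_{d+s}\setminus\Lambda_{d+s}$ and the set $\Omega_{d+s}$.

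First I would establish that bijection. For any monomial $m''\in M_{d+s}\setminus\Lambda_{d+s}$, set $\i=\iota_s(m'')$ and $m=m''/\x_\i\in M_d$; then $(\i,m)\in\Omega_{d+s}$ by construction. The only nontrivial check is that $m\in\Lambda_d^{\mathsf{c}}$. If instead $m=\mu_\nu\in\Lambda_d$, then Remark~\ref{rem:x0} gives $x_0\mid \mu_\nu$, so comparing the exponent of $x_0$ in $m''=\x_\i\mu_\nu$ with the fact that $\i$ records the $s$ smallest indices appearing in $m''$ forces $\i=(0,\dots,0)$, giving $m''=x_0^s\mu_\nu\in\Lambda_{d+s}$, a contradiction. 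Conversely, for any $(\i,m)\in\Omega_{d+s}$, the monomial $\x_\i m$ lies in $M_{d+s}\setminus\Lambda_{d+s}$ (otherwise $\x_\i m=x_0^s\mu_\nu$ would force $\i=(0,\dots,0)$ and $m=\mu_\nu\in\Lambda_d$). Hence these monomials form an $A$-basis of $M_{d+s}/\langle\Lambda_{d+s}\rangle$.

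Next I would construct $q_{\i,m}$ by iterated reduction. Starting with $\phi_{d+s}(e_{\i,m})=\x_\i m-\sum_\nu a_\nu\x_\i\mu_\nu$, I examine each summand $\x_\i\mu_\nu$. If it lies in $\Lambda_{d+s}$, it is kept; otherwise it equals $\x_{\i'}m'$ with $(\i',m')\in\Omega_{d+s}$, and by Remark~\ref{rem:x0} the extra factor of $x_0$ in $\mu_\nu$ gives $\i'$ strictly more zero-entries than $\i$. Substituting $\x_{\i'}m'=\phi_{d+s}(e_{\i',m'})+\sum_\lambda a'_\lambda\x_{\i'}\mu_\lambda$ and iterating, the process terminates after at most $s$ rounds because the number of zero-entries in the index tuple is bounded by $s$. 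All corrections used are images under $\phi_{d+s}$ of basis elements of $A^{\Omega_{d+s}}$, so the accumulated element $q_{\i,m}$ indeed lies in $\langle\Omega_{d+s}\rangle$ and has the advertised shape.

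Finally, the decomposition follows by considering the composition $\langle\Omega_{d+s}\rangle\hookrightarrow M_{d+s}\twoheadrightarrow M_{d+s}/\langle\Lambda_{d+s}\rangle$: it sends $q_{\i,m}\mapsto\x_\i m$, and by the bijection above these images form a free $A$-basis of the target. Hence the composition is an isomorphism, which together with $\langle\Omega_{d+s}\rangle\subseteq M_{d+s}$ yields $M_{d+s}=\langle\Omega_{d+s}\rangle\oplus\langle\Lambda_{d+s}\rangle$. The main obstacle is carefully setting up the termination of the recursive reduction—namely, proving that at each step the new pair still belongs to $\Omega_{d+s}$ and that the number of zero-entries in the index tuple strictly increases—and it is precisely at this step that property~\eqref{eq:property}, via Remark~\ref{rem:x0}, plays the decisive role.
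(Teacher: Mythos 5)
Your argument is correct and is essentially the paper's proof: you reduce each generator $\x_\i\alpha_m$ of $\langle\Omega_{d+s}\rangle$ to a normal form $q_{\i,m}=\x_\i m-\sum_{\nu=1}^n b_\nu x_0^s\mu_\nu$ by iterated substitution, using the number of zero entries of the index tuple as the termination measure where the paper uses the lexicographic preorder on tuples, and you make explicit the bijection between $\Omega_{d+s}$ and the monomials of $M_{d+s}\setminus\Lambda_{d+s}$ that the paper leaves implicit. The only step to spell out is that the $q_{\i,m}$ generate $\langle\Omega_{d+s}\rangle$ (needed for the injectivity of your composition onto $M_{d+s}/\langle\Lambda_{d+s}\rangle$); this follows at once by downward induction on the number of zero entries, since each $\x_\i\alpha_m$ equals $q_{\i,m}$ plus an $A$-linear combination of generators with strictly more zeros, which is precisely what the paper phrases as a change of basis of $\langle\Omega_{d+s}\rangle$.
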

\begin{proof}
By property $(\ast_2)$, any monomial of $M_{d+s}$ lies in precisely one of the sets $\Lambda_{d+s}$ and $\{\x_\i m\mid (\i,m)\in\Omega_{d+s}\}$. 
The submodule $\langle\Omega_{d+s}\rangle$ is free with basis $\x_\i\alpha_m$ for $m\in\Lambda_{d}^\mathsf{c}$ and $\iota_s(\x_\i m)=\i$. We introduce the preorder on the basis elements of $\langle \Omega_{d+s}\rangle$ defined by ${\x_{\i_1}\alpha_{m_1}<\x_{\i_2}\alpha_{m_2}}$ if $\i_1<\i_2$. 
 
Given $(\i,m)\in\Omega_{d+s}$, we write
\[\x_\i\alpha_m=\x_\i m-\Sigma_{\i,m},\]
where $\Sigma_{\i,m}=\sum_{\nu=1}^n a_\nu\x_\i\mu_\nu$ for some $a_\nu\in A$.
 Consider the monomials in $\Sigma_{\i,m}$. Suppose $\x_\i\mu_\nu$ is a monomial of $\Sigma_{\i,m}$ that is not an element of $\Lambda_{d+s}$. Let $\i_\nu=\iota_s(\x_i\mu_\nu)$, and let $m_\nu=\frac{\x_\i\mu_\nu}{\x_{\i_\nu}}\in\Lambda_{d}^\mathsf{c}$. Note that $\i_\nu<\i$ so $\x_{\i_\nu}\alpha_{m_\nu}<\x_\i\alpha_m$. Furthermore, every monomial of
$\x_{\i}\mu_\nu-\x_{\i_\nu}\alpha_{m_\nu}$
is of the form $\x_{\i_\nu}\mu_\xi$ with $\iota_s(\x_{\i_\nu}\mu_\xi)<\i$ for $\xi=1,\dots,n$.
Write \[\x_\i\alpha_m-a_\nu\x_{\i_\nu}\alpha_{m_\nu}=\x_\i m-\Sigma_{\i,m}'.\]
By construction we have that the $\Sigma_{\i,m}'$ is given by replacing the monomial $\x_{\i}\mu_\nu$ in $\Sigma_{\i,m}$ with a linear combination of elements $\x_{\i_\nu}\mu_\xi$ with $\i_\nu<\i$. Thus, we can iteratively remove the monomials of $\Sigma_{\i,m}$ that are not in $\Lambda_{d+s}$  by subtracting elements of the form $a_\nu\x_{\i_\nu}\alpha_{m_\nu}$ that are smaller in our preorder. By property $(\ast_1)$, this procedure will terminate with an element of the form $q_{\i,m}=\x_\i m-\sum_{\nu=1}^n b_\nu x_0^s\mu_\nu$, for some $b_\nu\in A$. 

Using the preorder on the basis elements of $\langle\Omega_{d+s}\rangle$ and iteratively replacing the largest basis elements $\x_\i\alpha_m$ by $q_{\i,m}$, we obtain a change of basis of $\langle \Omega_{d+s}\rangle$. Thus, $\langle \Omega_{d+s}\rangle$ is free with a basis consisting of elements of the form \[q_{\i,m}=\x_\i m-\sum_{\nu=1}^n b_\nu x_0^s\mu_\nu,\]
for all monomials $\x_\i m$ in $M_{d+s}\setminus\Lambda_{d+s}$.  
The result now follows.
\end{proof}
\begin{remark}
This proof reduces to the proof of Proposition~\ref{prop:dirsumma} in the case $s=1$.
\end{remark}
Letting $\pi_2\colon\langle\Omega_{d+s}\rangle\oplus\langle\Lambda_{d+s}\rangle \to\langle\Lambda_{d+s}\rangle$ denote the projection, we get the following. 
\begin{corollary}\label{cor:1212121212}
 The composition $\psi_{d+s}=\pi_2\circ \phi_{d+s}\colon A^{\Psi_{d+s}}\to\langle\Lambda_{d+s}\rangle$ 
gives a presentation of $Q_{d+s}$ as
\[\xymatrix{
A^{\Psi_{d+s}}\ar[r]^-{\psi_{d+s}}& \langle\Lambda_{d+s}\rangle\ar[r]& Q_{d+s}\ar[r]&0.}\]
\end{corollary}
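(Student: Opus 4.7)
The plan is to deduce the corollary formally from Lemma~\ref{lem_1234567} together with the direct-sum decomposition $M_{d+s}=\langle\Omega_{d+s}\rangle\oplus\langle\Lambda_{d+s}\rangle$ already supplied by Proposition~\ref{prop:rerer}. First I would split the domain as $A^{\Phi_{d+s}}=A^{\Omega_{d+s}}\oplus A^{\Psi_{d+s}}$ according to the disjoint union $\Phi_{d+s}=\Omega_{d+s}\sqcup\Psi_{d+s}$. By the very definition of $\langle\Omega_{d+s}\rangle$ we have $\phi_{d+s}(A^{\Omega_{d+s}})=\langle\Omega_{d+s}\rangle$, so the image of $\phi_{d+s}$ in $M_{d+s}$ equals $\langle\Omega_{d+s}\rangle+\phi_{d+s}(A^{\Psi_{d+s}})$.

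Next, since one entire summand $\langle\Omega_{d+s}\rangle$ of $M_{d+s}$ is already contained in $\im(\phi_{d+s})$, quotienting by $\im(\phi_{d+s})$ kills it, and the projection $\pi_2$ induces a natural isomorphism
\[Q_{d+s}=M_{d+s}/\im(\phi_{d+s})\;\cong\;\langle\Lambda_{d+s}\rangle\big/\pi_2\bigl(\phi_{d+s}(A^{\Psi_{d+s}})\bigr).\]
Because $\pi_2\circ\phi_{d+s}|_{A^{\Psi_{d+s}}}=\psi_{d+s}$ by definition, the right-hand side is precisely $\coker(\psi_{d+s})$, which is exactly the presentation claimed.

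I do not anticipate any real obstacle here: all the nontrivial combinatorial content—producing the explicit basis $\{q_{\i,m}\}$ of $\langle\Omega_{d+s}\rangle$ and verifying that it, together with $\Lambda_{d+s}$, gives a splitting of the free module $M_{d+s}$—has already been carried out in Proposition~\ref{prop:rerer}. What remains is the purely elementary observation that whenever a submodule $K\subseteq X\oplus Y$ contains the summand $X$, the quotient map factors through $Y$ and identifies $(X\oplus Y)/K$ with $Y/\pi_Y(K)$. Applying this with $X=\langle\Omega_{d+s}\rangle$, $Y=\langle\Lambda_{d+s}\rangle$ and $K=\im(\phi_{d+s})$ gives the corollary immediately.
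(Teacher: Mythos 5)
Your argument is correct and is exactly the reasoning the paper leaves implicit: the paper derives this corollary immediately from Proposition~\ref{prop:rerer}, since $\langle\Omega_{d+s}\rangle=\phi_{d+s}(A^{\Omega_{d+s}})$ lies inside $\operatorname{im}(\phi_{d+s})$, so passing to the quotient identifies $Q_{d+s}$ with $\langle\Lambda_{d+s}\rangle/\pi_2\bigl(\phi_{d+s}(A^{\Psi_{d+s}})\bigr)=\operatorname{coker}(\psi_{d+s})$. Your elementary observation about a submodule $K\subseteq X\oplus Y$ containing the summand $X$ (with $Y\cap K=\pi_Y(K)$) is precisely the right justification, so there is nothing to add.
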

From this result, and the fact that $\langle\Lambda_{d+s}\rangle$ is free of rank $n$, we get a simple description of the $(n-1)$:th Fitting ideal of $Q_{d+s}$.
\begin{corollary}\label{cor:2323}
The $A$-module $Q_{d+s}$ is free of rank $n$ if and only if ${\psi_{d+s}\colon A^{\Psi_{d+s}}\to \langle \Lambda_{d+s}\rangle}$ is the zero map. Hence, the $(n-1)$:th Fitting ideal is generated by the entries of the matrix corresponding to $\psi_{d+s}$. 
\end{corollary}
\begin{proposition}\label{pro:lika}
With the assumptions of {{\ref{setup}}}, there is an equality of ideals
\[{\Fitt_{n-1}(Q_{d+1})=\Fitt_{n-1}(Q_{d+s})}\] for all $s\ge1$. Furthermore, $\Fitt_{N}(Q_{d+s})=A$ for all $N\ge n$ and $s\ge1$.
\end{proposition}
\begin{proof}
The second part follows immediately from Corollary~\ref{cor:1212121212} since $\langle\Lambda_{d+s}\rangle$ is free of rank~$n$. 
For the first part we have, by Corollary~\ref{cor:2323}, that the $(n-1)$:th Fitting ideal of $Q_{d+s}$ is generated by the entries of the matrix corresponding to $\psi_{d+s}=\pi_2\circ\phi_{d+s}$. We will now do certain column reductions of this matrix to help us relate the Fitting ideals.

For any $(\j,m')\in\Psi_{d+s}$, with $\j=(j_1,\dots,j_s)$, we define $i=\iota(j_sm')<j_s$. Then, we let $\j'$ be the ordered $s$-tuple where we remove $j_s$ from $\j$ and replace it with $i$. Letting $m=\frac{\x_\j m'}{\x_{\j'}}$, we have that $\x_\j m'=\x_{\j'}m$ with $\j>\j'$ and $(\j',m)\in\Omega_{d+s}\cup\Psi_{d+s}$. 
Writing $\x_{\j\setminus j_s}=x_1\cdots x_{j_{s-1}}$, and  
using the notation of \ref{sec:testas}, we consider the difference
\begin{align*}
\beta_{\j,m'}=\x_\j\alpha_{m'}-\x_{\j'}\alpha_{m}&=\x_{\j\setminus j_s}(x_{j_s}\alpha_{m'}-x_{i}\alpha_{m})=\x_{\j\setminus j_s}\beta_{j_{s},m'}=\\&=\x_{\j\setminus j_s}\sum_{\nu=1}^nc_\nu x_0\mu_\nu+\x_{\j\setminus j_s}\sum_{m''\in\Lambda_d^{\mathsf{c}}} c_{m''}x_0m'',
\end{align*}
where $c_\nu,c_{m''}\in A$.
Then, we let $\sigma_{j_s,m'}=\sum_{m''\in\Lambda_d^{\mathsf{c}}} c_{m''}x_0\alpha_{m''}$, and define
\[\gamma_{\j,m'}=\x_{\j\setminus j_s}\beta_{j_{s},m'}-\x_{\j\setminus j_s}\sigma_{j_{s},m'}=\x_{\j\setminus j_s}\gamma_{j_{s},m'}=\x_{\j\setminus j_s}\sum_{\nu=1}^n f_\nu^{(j_s,m')}x_0\mu_\nu,\]
where $f_{\nu}^{(j_s,m')}\in \Fitt_{n-1}(Q_{d+1})$. 
Every monomial $\x_{\j\setminus j_s}x_0\mu_\nu$ in $\gamma_{\j,m'}$ can, by Proposition~\ref{prop:rerer}, be expressed uniquely as a sum $\omega_{\nu}+\lambda_\nu$, where $\omega_\nu\in\langle\Omega_{d+s}\rangle$ and $\lambda_{\nu}\in\langle\Lambda_{d+s}\rangle$. 
Hence, 
\[\x_{\j\setminus j_s}\gamma_{j_{s},m'} =\sum_{\nu=1}^n f^{(j_s,m')}_\nu(\omega_\nu+\lambda_\nu),\]
and $\pi_2(\x_{\j\setminus j_s}\gamma_{j_{s},m'})=\sum_{\nu=1}^n f^{(j_s,m')}_\nu\lambda_\nu$.

Now, consider the preorder on the elements $\x_{\i}\alpha_m$ defined by $\x_{\i_1}\alpha_{m_1}<\x_{\i_2}\alpha_{m_2}$ if $\i_1<\i_2$. Then, we see that $\gamma_{\j,m'}$ was constructed from starting with $\x_\j\alpha_{m'}$ and subtracting linear combinations of elements of smaller order. As $\pi_2(\x_\i\alpha_m)=0$ for any $(\i,m)\in\Omega_{d+s}$, it follows that substituting the element $\x_\j\alpha_{m'}$ for $\gamma_{\j,m'}$ is equivalent to column reductions in the matrix corresponding to $\psi_{d+s}=\pi_2\circ\phi_{d+s}$. Thus, we get a new presentation 
\[\xymatrix{A^{\Psi_{d+s}}\ar[r]^-{\psi'_{d+s}}& \langle\Lambda_{d+s}\rangle\ar[r]&Q_{d+s}\ar[r]&0,}\]
where $\psi'_{d+s}(e_{\j,m'})=\pi_2(\gamma_{\j,m'})=\sum_{\nu=1}^n f^{(j_s,m')}_\nu\lambda_\nu$ for all $(\j,m')\in\Psi_{d+s}$. Thus, the entries of the matrix corresponding to $\psi_{d+s}'$ are linear combinations of elements of $\Fitt_{n-1}(Q_{d+1})$. As the matrix of $\psi'_{d+s}$ was obtained by column reductions in the matrix of $\psi_{d+s}$, we have that the ideal $\Fitt_{n-1}(Q_{d+s})$ is generated by the entries of the matrix corresponding to $\psi_{d+s}'$. Hence, it follows that $\Fitt_{n-1}(Q_{d+s})\subseteq\Fitt_{n-1}(Q_{d+1})$.

On the other hand, the ideal $\Fitt_{n-1}(Q_{d+1})$ is generated by the coefficients of $\gamma_{j,m'}$ for $(j,m')\in\Psi_{d+1}$. The coefficients of $\gamma_{j,m'}$ are trivially obtained as coefficients of $\psi_{d+s}(e_{\j,m'})$ where ${\j=(0,\dots,0,j)}$. Thus, we also have that $\Fitt_{n-1}(Q_{d+1})\subseteq\Fitt_{n-1}(Q_{d+s})$, so the result follows.
\end{proof}

\section{Generalizing the results}\label{sec:1}
In this section, we will explain why  a basis with property~\eqref{eq:property} of \ref{setup} is natural to consider, and use this fact  to generalize the results on Fitting ideals to more general cases.

Consider a polynomial ring $S=k[x_0,\dots,x_r]$ over a field $k$. We let $V_+(x_0)$ denote the hyperplane in $\P^r_k=\proj(S)$ defined by $x_0$. Furthermore, given a module~$Q$ over $S$, we let ${\operatorname{Supp}_+(Q)=\{\mathfrak{p}\in\proj(S)=\P^r_k\mid Q_\mathfrak{p}\neq0\}}$ denote the support of $Q$ in~$\P^r_k$. The subscript $+$ is here used to emphasize that these objects lie in $\P^r_k$, rather than in ${\mathbb{A}^{r+1}_k=\operatorname{Spec}(S)}$.
\begin{proposition}\label{prop:viktiggrej}
Let $k$ be a field, let $S=k[x_0,\dots,x_r]$, and let $M=\bigoplus_{i=1}^pS$. Let ${N\subseteq M}$ be a graded $S$-submodule generated in degrees at most $d$. Suppose that $Q=M/N$ has constant Hilbert polynomial $P(t)=n\le d$ and that $Q_d$ is a $k$-vector space of dimension~$n$. If $V_+(x_0)\cap \operatorname{Supp}_+({Q})=\text{\emph{\O}}$, then there is a set of monomials $\Lambda_d=\{\mu_1,\dots,\mu_n\}$ in $M_d$ such that their images in $Q$ constitute a basis of $Q_d$, and $\Lambda_d$ has property $\eqref{eq:property}$ from \ref{setup}. 
\end{proposition}
\begin{proof}
Consider the open set $D_+(x_0)$ in $\P^r_k$ where $x_0$ is invertible.
By the assumption on~$x_0$ it follows, 
since $P(t)=n$, that the $k$-vector space ${Q_{(x_0)}}$ has dimension $n$. 
With some abuse of notation, we write $M_{(x_0)}=\bigoplus_{i=1}^pk\left[x_1,\dots,x_r\right]e_i$. 
Then, as $Q_{(x_0)}$ is an $n$-dimensional quotient of $M_{(x_0)}$, it is generated by the images of monomials in $M_{(x_0)}$ of degrees at most~$n$. 
Introduce a lexicographical order on $M_{(x_0)}$ such that $x_1<\cdots<x_r<e_1<\cdots <e_p$, and let $B$ be the set of the smallest $n$ monomials in $M_{(x_0)}$ that gives a basis of $Q_{(x_0)}$. By construction, $B$ will have the property, for any monomial $m\in M_{(x_0)}$, that $m\not\in B$ implies that $x_im\not\in B$ for any $i$. 

Now we note that the image of the composition $M_d\to M\to M_{(x_0)}$ is precisely the vector space generated by all monomials of degrees at most $d$ in $M_{(x_0)}$. As $d\ge n$, it follows that the composition $M_d\to M_{(x_0)}\to Q_{(x_0)}$ is surjective,  hence so is the map
$Q_d\to Q_{(x_0)}$. Since $Q_d$ and $Q_{(x_0)}$ are $n$-dimenisonal vector spaces, the map $Q_d\to Q_{(x_0)}$ is even an isomorphism.
Thus, by homogenizing the monomials in $B$ using~$x_0$, we get a set $\Lambda_d$ with property~$(\ast_2)$, that maps to a basis of~$Q_d$.

Finally, if $\mu\in\Lambda_d$ is a degree $d$ monomial not divisible by $x_0$, then it follows from $(\ast_2)$ that $\Lambda_{d}$ must contain at least $d+1$ elements. Since $d+1>d\ge n$, this contradicts the assumption that the images of the elements of $\Lambda_{d}$ should constitute a basis of an $n$-dimensional vector space. Thus, $\Lambda_d$ also has property $(\ast_1)$.
\end{proof}

\begin{example}
Let $S=\C[X,Y,Z]$ and let $I=(XY,Z)$. The graded component of the quotient $Q=S/I$ in degree $2$ is free with basis $X^2$ and $Y^2$. 
This corresponds to two points in $\P^2$, namely $(1:0:0)$ and $(0:1:0)$.
The linear form $x=X+Y$ defines a hyperplane of $\P^2$ that contains none of these points. We do a change of variables of $S$ to $x=X+Y,y=Y$ and $z=Z$. Then, $I=(y^2-xy,z)$ and we see that $\mu_1=x^2$ and $\mu_2=xy$ is a basis of $Q_2$ corresponding to a set with property~\eqref{eq:property}.
\end{example}

\begin{lemma}\label{lem:ff}
Let $A$ be a ring and let $I,J\subset A$ be two ideals. Consider a faithfully flat ring homomorphism $A\to A'$. If $I\otimes_AA'=J\otimes_AA'$, then $I=J$. 
\end{lemma}
\begin{proof}
Consider the $A$-module $(I+J)/I$. As $A\to A'$ is flat, we have that \[\big((I+J)/I\big)\otimes_AA'=(I\otimes_AA'+J\otimes_AA')/(I\otimes_AA')=(I\otimes_AA')/(I\otimes_AA')=0.\] 
By faithful flatness, see, e.g., Theorem~7.2 in \cite{MR1011461}, it follows that $(I+J)/I=0$. Therefore, $J\subseteq I$, and, by symmetry, $I=J$. 
\end{proof}

\begin{lemma}\label{lem:r4r4}
Let $A$ be a local ring with residue field $k$. Given a finite field extension $k\hookrightarrow k'$, such that $k'$ is generated by one element over $k$, then there is a ring $A'$ lifting this extension such that $A\to A'$ is faithfully flat. 
\end{lemma}
\begin{proof}
Let $k'=k[x]/(x^n+a_1x^{n-1}+\dots+a_n)$ for some $n\ge2$ and $a_i\in k$. Let $b_i\in A$ be a lift of $a_i$ and write $A'=A[x]/(x^n+b_1x^{n-1}+\dots+b_n)$. Then, $A'$ is a free $A$-module so the result follows.
\end{proof}

We are now ready to prove our main theorem.
\begin{theorem}\label{thm:bigone}
Let $A$ be a ring, let $S=A[X_0,\dots,X_r]$, let $M=\bigoplus_{i=1}^pS$ and let $N$ be a graded $S$-submodule of $M$, generated in degrees at most $d$. Write $Q=M/N$ and suppose that the fibers $Q\otimes_Ak(\mathfrak{p})$ have constant Hilbert polynomial $P(t)=n\le d$, for every prime $\mathfrak{p}\subset A$. If the $d$:th-graded component $Q_d$ is locally free of rank $n$, then ${\Fitt_{n-1}(Q_{d+s})=\Fitt_{n-1}(Q_{d+1})}$ for all $s>0$. Furthermore, $\Fitt_{N}(Q_{d+s})=A$ for all $N\ge n$ and $s\ge0$.
\end{theorem}
\begin{proof}
As Fitting ideals commutes with localization \cite[Corollary~20.5]{eisen-comalg}, we can assume that $A$ is local with maximal ideal $\mathfrak{m}$, and that $Q_d$ is free of rank $n$. Considering $k=A/\mathfrak{m}$, we have an \mbox{$n$-dimensional} vector space $Q_d\otimes_Ak$. By assumption, the Hilbert polynomial of $Q\otimes_Ak$ is constant, $P(t)=n$. 
Now, we iteratively add elements to $k$, giving a finite field extension $k\hookrightarrow k'$, such that $k'$ has enough elements to find a hyperplane $V_+(x_0)$ that does not intersect the support of ${Q\otimes_Ak'}$ in $\P^r_{k'}$. By Lemma~\ref{lem:r4r4}, this field $k'$ can be lifted to a ring $A'$, giving a faithfully flat map $A\to A'$. Fitting ideals commute with base change, also by \cite[Corollary~20.5]{eisen-comalg}, so ${\Fitt_{n-1}(Q_{d+s})\otimes_AA'=\Fitt_{n-1}(Q_{d+s}\otimes_AA')}$. Thus, since $A\to A'$ is faithfully flat, it follows by Lemma~\ref{lem:ff}, that if ${\Fitt_{n-1}(Q_{d+s}\otimes_AA')=\Fitt_{n-1}(Q_{d+s'}\otimes_AA')}$ for any $s,s'>0$, then $\Fitt_{n-1}(Q_{d+s})=\Fitt_{n-1}(Q_{d+s'})$.

We can therefore assume that $k$ has enough elements to find a hyperplane $V_+(x_0)$ that does not intersect the support of ${Q\otimes_Ak}$ in $\P^r_k$.
After a change of variables of $S_1$ we then, by Proposition~\ref{prop:viktiggrej}, get a set of monomials $\Lambda_d=\{\mu_1,\dots,\mu_n\}\subset M_d$  with property \eqref{eq:property}, giving a basis of $Q_d\otimes_Ak$. 
By Nakayama's lemma, this basis is also a basis of $Q_d$.

Thus, we have reduced the problem to the assumptions of {\ref{setup}}, so the result follows from Proposition~\ref{pro:lika}.
\end{proof}

\section{A generalization of Gotzmann's persistence theorem}
In the case of polynomial rings over fields, Gasharov, in \cite{MR1468870}, proved a generalization of Gotzmann's persistence theorem to modules. When working over fields, the flatness condition is immediate, but we will use his result to prove Theorem~\ref{thm:a} over arbitrary rings. We state here a simplified version of one of his main theorems.
\begin{theorem}[{\cite[Theorem~4.2]{MR1468870}}]\label{thm:gash}
Let $k$ be a field, let $S=k[X_0,\ldots,X_r]$, and let ${M=\bigoplus_{i=1}^pS}$ be a free $S$-module of rank $p$. Let $N\subseteq M$ be a submodule, generated in degrees at most $d$, and consider the quotient $Q=M/N$. 
If $\dim_kQ_{d+1}=(\dim_kQ_d)^{\langle d\rangle}$, then $\dim_kQ_{d+2}=(\dim_kQ_{d+1})^{\langle d+1\rangle}$.
\end{theorem}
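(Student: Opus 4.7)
The plan is to reduce the statement to the case of lex segment submodules and then apply the classical Gotzmann persistence theorem for ideals componentwise to $p$ cyclic quotients of $S$. First, I would establish the module analogue of Macaulay's lex-ification: for any graded submodule $N \subseteq M = \bigoplus_{i=1}^p S$ there exists a lex segment submodule $L \subseteq M$ with the same Hilbert function as $N$, and if $N$ is generated in degrees at most $d$, then so is $L$. This follows the classical pattern, replacing each graded component with the lex initial segment of the same dimension and verifying that the result is a submodule whose generators sit in degrees at most $d$. After this reduction, we may assume $N$ itself is a lex segment submodule.

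Next, I would exploit the concrete structure of lex segment submodules. With respect to a position-over-term order on $M$ (with $e_1 > e_2 > \cdots > e_p$ in the module basis), a lex segment submodule $L$ decomposes as $L = \bigoplus_{i=1}^p I_i e_i$, where each $I_i \subseteq S$ is a lex ideal, and the ideals $I_i$ are arranged in a cascading fashion controlled by the total dimensions $\dim_k Q_t$. Consequently, $Q = M/L = \bigoplus_{i=1}^p S/I_i$, and its Hilbert function is a sum $h(t) = \sum_{i=1}^p h_i(t)$, where $h_i(t) = \dim_k (S/I_i)_t$. In this form the problem is essentially $p$ parallel instances of the classical setting.

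The heart of the argument is the following step. By the classical Macaulay bound one has $h_i(d+1) \le h_i(d)^{\langle d\rangle}$ for each $i$. Using the cascading lex structure, the Macaulay representation of $h(d)$ can be read off from those of the $h_i(d)$ in a compatible way, so the hypothesis $h(d+1) = h(d)^{\langle d\rangle}$ propagates to each summand and forces $h_i(d+1) = h_i(d)^{\langle d\rangle}$ for every $i$. The classical Gotzmann persistence theorem for ideals, applied to each $I_i$, then yields $h_i(d+2) = h_i(d+1)^{\langle d+1\rangle}$. Summing and reapplying the same combinatorial identity gives $h(d+2) = h(d+1)^{\langle d+1\rangle}$, as desired.

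The main obstacle is precisely the combinatorial step of propagating the equality case in Macaulay's bound from the sum $h = \sum h_i$ down to each $h_i$. The function $x \mapsto x^{\langle d\rangle}$ is highly non-linear, so this does not follow formally; it relies on a careful bookkeeping of the Macaulay representation along the lex cascade described above. This is the one place where the module case introduces genuine new difficulty beyond the classical ideal-theoretic persistence theorem, and is essentially the technical core of Gasharov's argument.
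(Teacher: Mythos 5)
First, note that the paper does not prove this statement at all: it is quoted verbatim from Gasharov (Theorem~4.2 of the cited reference), so there is no internal proof to compare with. Your sketch therefore has to stand on its own, and it has a genuine gap at its very first step. The claimed lexification lemma --- that the lex segment submodule $L\subseteq M$ with the same Hilbert function as $N$ is again generated in degrees at most $d$ --- is false. Already for $p=1$ take $N=(X_0^2,X_1^2)\subset k[X_0,X_1]$, generated in degree $2$: the quotient has Hilbert function $1,2,1,0,\dots$, so the lex ideal $L$ has $L_2=\langle X_0^2,X_0X_1\rangle$, while $S_1L_2$ is only $3$-dimensional and $L_3=S_3$ forces a new generator $X_1^3$ in degree $3$. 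Preservation of the generating degree is not part of the ``classical pattern'' of Macaulay--Hulett lexification, which only matches Hilbert functions degreewise.

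Worse, in the only range where you need the lemma it is equivalent to the statement you are trying to prove, so the reduction ``we may assume $N$ is a lex segment submodule'' is circular. Indeed, $\dim_k L_{d+2}=\dim_k N_{d+2}$ by construction, and $\dim_k S_1L_{d+1}$ is exactly the Macaulay-extremal value; hence ``$L$ has no new generator in degree $d+2$'' is literally the conclusion $\dim_kQ_{d+2}=(\dim_kQ_{d+1})^{\langle d+1\rangle}$, just as the hypothesis $\dim_kQ_{d+1}=(\dim_kQ_d)^{\langle d\rangle}$ is the statement that $L$ has no new generator in degree $d+1$. Once one \emph{is} in the lex case, the conclusion is an easy combinatorial computation ($S_1$ times a lex segment is again a lex segment of extremal size), so the cascade/propagation analysis you identify as the technical core is not where the difficulty lies: the entire content of persistence is the passage from $N$ to $L$ in the new degree $d+2$, which cannot be achieved by merely matching Hilbert functions in degrees $\le d+1$. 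Gasharov's actual argument does not proceed by such a transfer; it establishes Macaulay-, Green- and Gotzmann-type bounds directly for modules. As it stands, your proposal assumes the conclusion and does not prove the theorem.
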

This theorem uses the notation of Macaulay representations defined as follows: for any integer $d$, the $d$:th Macaulay representation of an integer $a$ is 
\[a=\binom{m_d}{d}+\binom{m_{d-1}}{d-1}+\dots+\binom{m_1}{1},\]
where $m_d,\dots,m_1$ are the unique integers satisfying $m_d>m_{d-1}>\dots>m_1\ge0$. Then, we define 
\[a^{\langle d\rangle}=\binom{m_d+1}{d+1}+\binom{m_{d-1}+1}{d-1+1}+\dots+\binom{m_1+1}{1+1},\]
see also \cite[Section~4.2]{MR1251956}.
In the case with constant Hilbert polynomials, this theorem reduces to the following, see also \cite[Remark~2.4]{QuotFitting}.
\begin{corollary}\label{cor:gash}
Let $k$ be a field, let $S=k[X_0,\ldots,X_r]$, and let $M=\bigoplus_{i=1}^pS$ be a free $S$-module of rank $p$. Let $N\subseteq M$ be a submodule, generated in degrees at most $d$, and write $Q=M/N$. If $Q_t$ has dimension $n\le d$ over $k$ for $t=d$ and $t=d+1$, then $Q_t$ has dimension~$n$ for all $t\ge d$.
\end{corollary}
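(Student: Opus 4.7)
The plan is to deduce this corollary from Theorem~\ref{thm:gash} by an induction on the degree, where the key input is a short combinatorial computation showing that when $n\le d$ the operation $(-)^{\langle d\rangle}$ fixes~$n$.

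First I would work out the $d$-th Macaulay representation of an integer $n$ with $n\le d$. Taking $m_d=d,\ m_{d-1}=d-1,\ \dots,\ m_{d-n+1}=d-n+1$ (and truncating the representation, since every later binomial coefficient $\binom{j-1}{j}$ vanishes) gives
\[n=\binom{d}{d}+\binom{d-1}{d-1}+\dots+\binom{d-n+1}{d-n+1},\]
which is a sum of $n$ ones, as required. Applying the definition of $(-)^{\langle d\rangle}$ to this representation then yields
\[n^{\langle d\rangle}=\binom{d+1}{d+1}+\binom{d}{d}+\dots+\binom{d-n+2}{d-n+2}=n.\]
By the same argument, $n^{\langle t\rangle}=n$ for every integer $t\ge n$, and in particular for every $t\ge d$.

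Next I would set up the induction. The base case is immediate from the hypothesis: $\dim_kQ_{d+1}=n=n^{\langle d\rangle}=(\dim_kQ_d)^{\langle d\rangle}$, so Theorem~\ref{thm:gash} applies and gives $\dim_kQ_{d+2}=(\dim_kQ_{d+1})^{\langle d+1\rangle}=n^{\langle d+1\rangle}=n$. For the inductive step, suppose $\dim_kQ_t=\dim_kQ_{t+1}=n$ for some $t\ge d$. Since $N$ is generated in degrees at most $d\le t$, it is a fortiori generated in degrees at most $t$, so Theorem~\ref{thm:gash} applies with $d$ replaced by $t$, and we conclude $\dim_kQ_{t+2}=(\dim_kQ_{t+1})^{\langle t+1\rangle}=n^{\langle t+1\rangle}=n$.

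There is no real obstacle here; the content is entirely in the Macaulay bookkeeping of the first paragraph, and everything else is a routine iteration of Gasharov's theorem. The only small point to be careful about is that the hypothesis of Theorem~\ref{thm:gash} allows us to raise the cut-off degree from $d$ to $t$ at each inductive step, which is precisely what lets the induction continue indefinitely.
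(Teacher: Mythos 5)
Your proof is correct and follows essentially the same route as the paper: compute the $d$-th Macaulay representation of $n\le d$ as a sum of $n$ ones, observe $n^{\langle t\rangle}=n$ for $t\ge d$, and iterate Theorem~\ref{thm:gash}. The only difference is that you spell out the inductive iteration (raising the cut-off degree from $d$ to $t$), which the paper leaves implicit.
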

\begin{proof}
If $n\le d$, then the $d$:th Macaulay representation of $n$ is
\[n=\binom{d}{d}+\binom{d-1}{d-1}+\dots+\binom{d-n+1}{d-n+1}.\]
Therefore, $n^{\langle t\rangle}=n$ for all $t\ge d$, and the result follows by Theorem~\ref{thm:gash}.
\end{proof}

We now get the generalized version of Gotzmann's persistence theorem, that we called Theorem~\ref{thm:a} in the introduction, as an immediate consequence.

\begin{corollary}\label{cor:cor}
Let $A$ be a ring, let $S=A[X_0,\dots,X_r]$, let $M=\bigoplus_{i=1}^pS$, and let $N$ be a graded $S$-submodule of $M$, generated in degrees at most $d$. Write $Q=M/N$ and let $n\le d$. If $Q_t$ is locally free of rank $n$ for $t=d$ and $t=d+1$, then $Q_t$ is locally free of rank~$n$ for all~${t\ge d}$.
\end{corollary}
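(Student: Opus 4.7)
The plan is to deduce the corollary from Theorem~\ref{thm:bigone} together with Proposition~\ref{prop:efitting}. The essential preliminary is to upgrade the hypothesis of local freeness in the two degrees $d$ and $d+1$ to the condition that every fiber of $Q$ has constant Hilbert polynomial $P(t)=n$, which is what Theorem~\ref{thm:bigone} requires beyond local freeness in degree $d$.

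First I would reduce to the case that $A$ is local, since both local freeness and Fitting ideals commute with localization. For any prime $\mathfrak{p}\subset A$, consider the fiber $Q\otimes_A k(\mathfrak{p}) = M_{k(\mathfrak{p})}/\overline{N}$. Because $N$ is generated in degrees at most $d$, so is $\overline{N}$, and because $Q_d$ and $Q_{d+1}$ are locally free of rank $n$ their fibers $(Q\otimes k(\mathfrak{p}))_d$ and $(Q\otimes k(\mathfrak{p}))_{d+1}$ have dimension $n$ over $k(\mathfrak{p})$. Since $n\le d$, Corollary~\ref{cor:gash} (Gasharov) applies and gives $\dim_{k(\mathfrak{p})}(Q\otimes k(\mathfrak{p}))_t = n$ for every $t\ge d$, so the fiberwise Hilbert polynomial is constant equal to $n$.

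With this in hand, I would invoke Theorem~\ref{thm:bigone} to get $\Fitt_{n-1}(Q_{d+s}) = \Fitt_{n-1}(Q_{d+1})$ for every $s\ge 1$. Local freeness of $Q_{d+1}$ together with Proposition~\ref{prop:efitting} forces $\Fitt_{n-1}(Q_{d+1}) = 0$, hence $\Fitt_{n-1}(Q_{d+s}) = 0$. To apply Proposition~\ref{prop:efitting} in the reverse direction and conclude local freeness of $Q_{d+s}$, I still need $\Fitt_n(Q_{d+s}) = A$. This follows from base change for Fitting ideals \cite[Corollary~20.5]{eisen-comalg}: the fibers of $Q_{d+s}$ have dimension exactly $n$ by the calculation above, so $\Fitt_n(Q_{d+s})\otimes k(\mathfrak{p}) = \Fitt_n(Q_{d+s}\otimes k(\mathfrak{p})) = k(\mathfrak{p})$, which means $\Fitt_n(Q_{d+s})$ is contained in no maximal ideal and hence equals $A$ (locally). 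Proposition~\ref{prop:efitting} then yields that $Q_{d+s}$ is locally free of rank $n$ for every $s\ge 1$.

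The main subtlety is not the application of Theorem~\ref{thm:bigone} itself but the double use of the fiberwise Hilbert function: once, through Corollary~\ref{cor:gash}, to supply the hypothesis of Theorem~\ref{thm:bigone}, and a second time to pin down $\Fitt_n(Q_{d+s})$, since Theorem~\ref{thm:bigone} controls only $\Fitt_{n-1}$. Once the fiberwise Hilbert polynomial is identified, both Fitting-ideal conditions of Proposition~\ref{prop:efitting} are in reach and the conclusion is formal.
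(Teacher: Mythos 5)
Your proposal is correct and follows essentially the same route as the paper: Corollary~\ref{cor:gash} supplies the constant fiberwise Hilbert polynomial, Theorem~\ref{thm:bigone} gives $\Fitt_{n-1}(Q_{d+s})=\Fitt_{n-1}(Q_{d+1})=0$, and Proposition~\ref{prop:efitting} converts this back into local freeness. Your extra verification that $\Fitt_n(Q_{d+s})=A$ via the fibers is a welcome detail that the paper's proof leaves implicit (there it is hidden in the local presentation $A^{\Psi_{d+s}}\to\langle\Lambda_{d+s}\rangle\to Q_{d+s}\to0$ with $\langle\Lambda_{d+s}\rangle$ free of rank $n$), but it does not change the substance of the argument.
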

\begin{proof}
By Corollary~\ref{cor:gash}, we have that $Q\otimes_Ak(\mathfrak{p})$ has constant Hilbert polynomial $P(t)=n$ for all prime ideals $\mathfrak{p}\subset A$. Thus, since $Q_{d}$ is locally free of rank $n$ it follows, by Theorem~\ref{thm:bigone}, that $\Fitt_{n}(Q_{d+s})=A$ and \[{\Fitt_{n-1}(Q_{d+s})=\Fitt_{n-1}(Q_{d+1})},\] for all $s>0$. Furthermore, since $Q_{d+1}$ is locally free of rank $n$, we have ${\Fitt_{n-1}(Q_{d+1})=0}$. Hence, $\Fitt_{n-1}(Q_{d+s})=0$ for all $s>0$, so $Q_{d+s}$ is locally free of rank $n$ for all $s\ge0$.
\end{proof}

\begin{remark}
Note that it follows, with the assumptions of Corollary~\ref{cor:cor}, that if $\Lambda_{d}$ gives a basis of $Q_{d}$ with property \eqref{eq:property}, then $\Lambda_{d+s}$ gives a basis of $Q_{d+s}$ for every $s\ge0$.
\end{remark}

\section{The Quot scheme of points}
We will end by showing an application of these results to Quot schemes of points. 
Let $S$ be a scheme and let $\mathcal{F}$ be a coherent sheaf on $\P^r\to S$. The Quot functor $\mathcal{Q}uot^P_{\mathcal{F}/\P^r/S}$ was first introduced by Grothendieck in \cite{MR1611822}. It parametrizes quotient sheaves of $\mathcal{F}$ that are flat over the base with Hilbert polynomial $P(t)$ on the fibers. A classical result is that this functor is representable by a projective scheme, which we denote by $\operatorname{Quot}^P_{\mathcal{F}/\P^r/S}$. This was originally proved using an embedding into a Grassmannian. See also \cite{MR0209285} and \cite{MR2222646}. The existence of this embedding was only proved abstractly, and there exist no easy description of the defining equations of the embedding in general. 

In the special case with constant Hilbert polynomial $P(t)=n$, we get the \emph{Quot scheme of points}, see, e.g., \cite{MR2346502}. In \cite{QuotFitting}, Skjelnes finds the locus that defines the Quot scheme of points as a closed subscheme of a Grassmannian. 
\begin{theorem}[{\cite[Theorem~4.7]{QuotFitting}}]\label{thm:roy1}
Let $V$ be a projective and finitely generated module on a noetherian ring $A$. Let $\fO^{\oplus p}_{\P(V)}$ denote the free sheaf of rank $p$ on $f\colon\P(V)\to\operatorname{Spec}(A)=S$. Fix two integers $n\le d$, and let $g\colon G\to S$ denote the Grassmann scheme parametrizing locally free rank $n$ quotients of $f_\ast\fO(d)^{\oplus p}_{\P(V)}$. We let
\[0\longrightarrow\mathcal{R}_d\longrightarrow g^\ast f_\ast\fO^{\oplus p}_{\P(V)}(d)\longrightarrow\mathcal{E}_d\longrightarrow0\]
denote the universal short exact sequence on the Grassmann $G$, and let $\mathcal{E}_{d+s}$ denote the cokernel of the induced map $\mathcal{R}_d\otimes_{\fO_G}g^\ast f_\ast\fO_{\P(V)}(s)\to g^\ast f_\ast\fO^{\oplus p}_{\P(V)}(d+s)$. Then, we have that
\[\operatorname{Quot}^n_{\fO^{\oplus p}_{\P(V)}/\P(V)/S}=\bigcap_{s\ge1}V\bigl(\Fitt_{n-1}(\mathcal{E}_{d+s})\bigr),\]
where $V\bigl(\Fitt_{n-1}(\mathcal{E}_{d+1})\bigr)$ denotes the closed subscheme defined by the $(n-1)$:th Fitting ideal of $\mathcal{E}_{d+1}$.
\end{theorem}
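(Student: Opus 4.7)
The plan is to verify the equality of closed subschemes of $G$ functorially on $T$-valued points. For any noetherian $S$-scheme $T$, a morphism $\phi\colon T\to G$ is specified by a locally free rank-$n$ quotient $\mathcal{E}_d^T$ of $f_\ast\fO_{\P(V_T)}^{\oplus p}(d)$, equivalently by a locally direct-summand subsheaf $\fR_d^T\subseteq f_\ast\fO_{\P(V_T)}^{\oplus p}(d)$. Multiplying this subsheaf up in every degree produces a graded submodule $\mathcal{N}\subseteq\fO_{\P(V_T)}^{\oplus p}$ generated in degree $d$, and by construction the graded pieces of $\mathcal{Q}=\fO_{\P(V_T)}^{\oplus p}/\mathcal{N}$ in degrees $d+s$ are precisely the sheaves $\mathcal{E}_{d+s}^T$ appearing in the statement. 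So the task reduces to showing that $\phi$ factors through $\operatorname{Quot}^n$ if and only if $\Fitt_{n-1}(\mathcal{E}_{d+s}^T)=0$ for every $s\ge 1$.

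For the forward inclusion, a $T$-point of $\operatorname{Quot}^n$ is a $T$-flat quotient $\mathcal{Q}$ of $\fO_{\P(V_T)}^{\oplus p}$ with constant Hilbert polynomial $P(t)=n$. Flatness together with the hypothesis $n\le d$ forces each graded piece $\mathcal{Q}_{d+s}$ to be locally free of rank $n$, using Corollary~\ref{cor:gash} applied fiberwise to control the Hilbert function. The canonical map $\fO_{\P(V_T)}^{\oplus p}(d+s)\to\mathcal{Q}_{d+s}$ factors through the submodule generated by $\fR_d^T$, so $\mathcal{Q}_{d+s}=\mathcal{E}_{d+s}^T$, and Proposition~\ref{prop:efitting} immediately yields $\Fitt_{n-1}(\mathcal{E}_{d+s}^T)=0$ for every $s\ge 1$.

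For the reverse inclusion, assume $\Fitt_{n-1}(\mathcal{E}_{d+s}^T)=0$ for all $s\ge 1$. On each fiber this forces $\dim_k(\mathcal{E}_{d+s}^T\otimes k(\mathfrak{p}))\ge n$, i.e.\ a lower bound. For the matching upper bound, apply Macaulay's growth estimate fiberwise to $\mathcal{Q}\otimes k(\mathfrak{p})$: since this quotient is generated in degree $d$ with its degree-$d$ piece of dimension $n\le d$, the Macaulay bound $n^{\langle t\rangle}=n$ computed in the proof of Corollary~\ref{cor:gash} gives $\dim_k(\mathcal{E}_{d+s}^T\otimes k(\mathfrak{p}))\le n$ for all $s\ge 0$. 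Hence each $\mathcal{E}_{d+s}^T$ has fiber dimension exactly $n$ everywhere, so $\Fitt_n(\mathcal{E}_{d+s}^T)=\fO_T$ as well, and Proposition~\ref{prop:efitting} upgrades this to local freeness of rank $n$. All graded pieces of $\mathcal{Q}$ from degree $d$ onward are therefore locally free of rank $n$; by standard flatness criteria for graded sheaves on projective space, $\mathcal{Q}$ is then $T$-flat with fiberwise Hilbert polynomial $P(t)=n$, so $\phi$ factors through $\operatorname{Quot}^n$.

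The main obstacle is the reverse inclusion, and specifically upgrading the vanishing of $\Fitt_{n-1}(\mathcal{E}_{d+s}^T)$ to genuine local freeness. The Fitting condition only delivers half of the fiber-dimension estimate; the other half must come from the combinatorial content of Macaulay's theorem, used uniformly along the base. This is why Corollary~\ref{cor:gash} is an indispensable input, and it is also why the theorem is stated with the restrictive hypothesis $n\le d$: only in that range does Macaulay's bound collapse to the constant $n$, making the Fitting-ideal description sharp.
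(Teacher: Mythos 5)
This statement is not proved in the paper at all: it is quoted verbatim from Skjelnes \cite{QuotFitting} (Theorem~4.7 there) and used as an input, together with Corollary~\ref{cor:cor}, to deduce Theorem~B. So there is no internal proof to compare against; your functor-of-points plan is in the spirit of the cited reference, but as written it has genuine gaps at exactly the technically hard points.

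First, in the forward inclusion you assert $\mathcal{Q}_{d+s}=\mathcal{E}^T_{d+s}$. What is automatic is only a canonical surjection $\mathcal{E}^T_{d+s}\twoheadrightarrow f_{\ast}(\mathcal{Q}(d+s))$, since the submodule generated in degree $d$ by $\mathcal{R}^T_d$ is contained in, but a priori not equal to, the full degree-$(d+s)$ kernel; the asserted equality is a regularity statement that needs proof. (The inclusion you want can be salvaged without it, because Fitting ideals only grow under surjections, so $\Fitt_{n-1}(\mathcal{E}^T_{d+s})\subseteq\Fitt_{n-1}\bigl(f_{\ast}\mathcal{Q}(d+s)\bigr)=0$; but you must still know that $f_{\ast}(\mathcal{Q}(d+s))$ is locally free of rank $n$ and compatible with base change, and that the embedding $\operatorname{Quot}^n\hookrightarrow G$ is precisely the map given by the degree-$d$ quotient. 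That is cohomology-and-base-change plus the regularity bound where $n\le d$ actually enters; Corollary~\ref{cor:gash} is a statement about graded modules over a field and does not deliver it.) Second, and more seriously, in the reverse inclusion your upper bound $\dim_{k(\mathfrak p)}\bigl(\mathcal{E}^T_{d+s}\otimes k(\mathfrak p)\bigr)\le n$ appeals to ``Macaulay's growth estimate fiberwise''. No such one-sided bound is available from the paper: Corollary~\ref{cor:gash} is a persistence statement that requires dimension $n$ in the two consecutive degrees $d$ and $d+1$, and gives nothing when only the degree-$d$ dimension is known. The inequality you need is the Macaulay-type growth bound for quotients of free modules (Hulett, Gasharov), which must be cited or proved; it is precisely the nontrivial combinatorial input, and omitting it leaves the reverse inclusion unestablished. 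Finally, even granting local freeness of all $\mathcal{E}^T_{d+s}$ and hence $T$-flatness of the associated sheaf, you have not checked that the resulting $T$-point of $\operatorname{Quot}^n$ maps back to the original point of $G$, i.e.\ that $f_{\ast}$ of the twist by $d$ of your constructed quotient coincides with $\mathcal{E}^T_d$ as a quotient of $g^\ast f_\ast\fO^{\oplus p}_{\P(V)}(d)$ --- a comparison needed at degree $d$ exactly, not just for $t\gg0$ --- and without it you have not proved the scheme-theoretic equality of the two closed subschemes of $G$.
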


A consequence of this, using Corollary~\ref{cor:gash}, is that only one Fitting ideal is necessary for describing the underlying topological spaces  
$\bigl|\operatorname{Quot}^n_{\fO^{\oplus p}_{\P(V)}/\P(V)/S}\bigr|=\bigl|V\bigl(\Fitt_{n-1}(\mathcal{E}_{d+1})\bigr)\bigr|,$ see \cite[Corollary~4.8]{QuotFitting}. We can now prove that the whole scheme structure is given by only one Fitting ideal, which was Theorem~\ref{thm:b} of our introduction.
\begin{theorem}
With the notation of Theorem~\ref{thm:roy1}, we have that
\[\operatorname{Quot}^n_{\fO^{\oplus p}_{\P(V)}/\P(V)/S}=V\bigl(\Fitt_{n-1}(\mathcal{E}_{d+1})\bigr).\]
\end{theorem}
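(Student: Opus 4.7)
The plan is to derive Theorem~\ref{thm:b} from Theorem~\ref{thm:roy1} together with the module-theoretic Corollary~\ref{cor:cor} applied over the closed subscheme $W=V(\Fitt_{n-1}(\mathcal{E}_{d+1}))\subseteq G$. By Theorem~\ref{thm:roy1} we already have $\operatorname{Quot}^n_{\fO^{\oplus p}_{\P(V)}/\P(V)/S}=\bigcap_{s\ge1}V(\Fitt_{n-1}(\mathcal{E}_{d+s}))$, so the inclusion $\operatorname{Quot}^n\subseteq V(\Fitt_{n-1}(\mathcal{E}_{d+1}))$ is automatic, and the task reduces to the sheaf-theoretic containment $\Fitt_{n-1}(\mathcal{E}_{d+s})\subseteq\Fitt_{n-1}(\mathcal{E}_{d+1})$ of ideal sheaves on $G$ for every $s\ge 1$.

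The central step is to show that $\mathcal{E}_{d+1}|_W$ is locally free of rank~$n$. Since Fitting ideals commute with base change \cite[Corollary~20.5]{eisen-comalg}, we have $\Fitt_{n-1}(\mathcal{E}_{d+1}|_W)=0$, while $\mathcal{E}_d|_W$ is locally free of rank~$n$ by the Grassmannian construction. To obtain the complementary condition $\Fitt_n(\mathcal{E}_{d+1}|_W)=\mathcal{O}_W$ required by Proposition~\ref{prop:efitting}, it suffices by Nakayama to show that $\dim_{k(w)}\mathcal{E}_{d+1}\otimes k(w)\le n$ at every point $w\in W$. For this fiberwise bound, I would appeal to \cite[Corollary~4.8]{QuotFitting}: the underlying topological spaces of $W$ and of $\operatorname{Quot}^n$ coincide, so each point $w\in W$ lies in $\operatorname{Quot}^n$ and its fiber is a flat graded quotient over $k(w)$ with constant Hilbert polynomial $P(t)=n$. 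In particular $\dim_{k(w)}\mathcal{E}_{d+1}\otimes k(w)=n$ throughout $W$, which together with $\Fitt_{n-1}(\mathcal{E}_{d+1}|_W)=0$ completes the proof that $\mathcal{E}_{d+1}|_W$ is locally free of rank~$n$.

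With both $\mathcal{E}_d|_W$ and $\mathcal{E}_{d+1}|_W$ locally free of rank~$n$, I would conclude by passing to an affine open cover of $W$ over which the projective bundle $\P(V)\to\operatorname{Spec}(A)$ trivializes. On each such piece $\operatorname{Spec}(A')\subseteq W$ we recover the setting of Corollary~\ref{cor:cor}, with $S'=A'[X_0,\dots,X_r]$, $M'=\bigoplus_{i=1}^p S'$, and $N'\subseteq M'$ the graded submodule generated in degree $d$ by the pullback of $\mathcal{R}_d$. Corollary~\ref{cor:cor} then yields that $\mathcal{E}_{d+s}|_W$ is locally free of rank~$n$ for every $s\ge 0$, so $\Fitt_{n-1}(\mathcal{E}_{d+s}|_W)=0$, and therefore $\Fitt_{n-1}(\mathcal{E}_{d+s})\subseteq\Fitt_{n-1}(\mathcal{E}_{d+1})$ as ideal sheaves of $\mathcal{O}_G$ for every $s\ge 1$. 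Combined with Theorem~\ref{thm:roy1}, this gives the desired scheme-theoretic equality.

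The main obstacle is the local-freeness step for $\mathcal{E}_{d+1}|_W$: the vanishing $\Fitt_{n-1}(\mathcal{E}_{d+1}|_W)=0$ only forces fiber dimensions to be at least $n$, and I see no clean way to produce the matching upper bound purely from the Grassmannian data on $G$ without first identifying $|W|$ with $|\operatorname{Quot}^n|$ as sets. This is the input I would extract from \cite[Corollary~4.8]{QuotFitting}, which itself rests on the module-theoretic Corollary~\ref{cor:gash} of Gasharov's theorem.
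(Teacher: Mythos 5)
Your proposal is correct and follows essentially the same route as the paper, whose entire proof is the single line ``apply Corollary~\ref{cor:cor} to Theorem~\ref{thm:roy1}'': you restrict to $W=V\bigl(\Fitt_{n-1}(\mathcal{E}_{d+1})\bigr)$, verify the hypotheses of Corollary~\ref{cor:cor} there, and deduce $\Fitt_{n-1}(\mathcal{E}_{d+s})\subseteq\Fitt_{n-1}(\mathcal{E}_{d+1})$ for all $s\ge1$. The additional care you take in showing that $\mathcal{E}_{d+1}|_W$ is locally free of rank $n$ (vanishing of $\Fitt_{n-1}$ plus the fiberwise dimension bound extracted from the set-theoretic statement \cite[Corollary~4.8]{QuotFitting}) is exactly the detail the paper's one-line proof leaves implicit, and your use of that cited result to supply the upper bound on fiber dimensions is legitimate.
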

\begin{proof}
The result follows directly from applying Corollary~\ref{cor:cor} to Theorem~\ref{thm:roy1}.
\end{proof}

\bibliography{references}{}
\bibliographystyle{amsalpha}

\end{document}